\newtheorem{theorem}{Theorem}
\newtheorem{lemma}[theorem]{Lemma}
\theoremstyle{remark}
\newtheorem{remark}[theorem]{Remark}
\theoremstyle{definition}
\newcommand{\email}[1]{\href{mailto:#1}{#1}}
\newcommand{\ud}{\mathrm{d}}
\DeclareMathOperator{\card}{card}
\newcommand{\Real}{\mathbb{R}}
\newcommand{\st}{\;|\;}
\newcommand{\Poly}[1]{\mathcal{P}^{#1}}
\newcommand{\elements}[1]{\mathcal{T}_{#1}}
\newcommand{\faces}[1]{\mathcal{F}_{#1}}
\newcommand{\Th}{\elements{h}}
\newcommand{\Thd}{\elements{h}^{\rm d}}
\newcommand{\Tha}{\elements{h}^{\rm a}}
\newcommand{\Fh}{\faces{h}}
\newcommand{\Fhb}{\faces{h}^{\rm b}}
\newcommand{\Fhi}{\faces{h}^{\rm i}}
\newcommand{\Fhd}{\faces{h}^{\rm d}}
\newcommand{\Fha}{\faces{h}^{\rm a}}
\newcommand{\TT}{\elements{T}}
\newcommand{\TF}{\elements{F}}
\newcommand{\FT}{\faces{T}}
\newcommand{\rF}{r_F^{k}}
\newcommand{\Rh}{R_h^{k}}
\newcommand{\Gh}{G_h^{k}}
\newcommand{\normal}{n}
\newcommand{\jump}[1]{[#1]_F}
\newcommand{\avg}[1]{\{#1\}_F}
\newcommand{\Erra}{\mathcal{E}_{a,h}^k}
\newcommand{\Errb}{\mathcal{E}_{b,h}^k}
\newcommand{\Err}{\mathcal{E}_h^k}
\newcommand{\lproj}[2]{\pi_{#1}^{#2}}
\newcommand{\term}{\mathfrak{T}}
\newcommand{\norm}[2]{\|#2\|_{#1}}
\newcommand{\Norm}[2]{\left\|#2\right\|_{#1}}
\newcommand{\seminorm}[2]{|#2|_{#1}}
\newcommand{\tnorm}{\@ifstar\@tnorms\@tnorm}
\newcommand{\@tnorms}[2]{%
  \left|\mkern-1.5mu\left|\mkern-1.5mu\left|
  #2
  \right|\mkern-1.5mu\right|\mkern-1.5mu\right|_{#1}
}
\newcommand{\@tnorm}[2]{%
  \mathopen{|\mkern-1.5mu|\mkern-1.5mu|}
  #2
  \mathclose{|\mkern-1.5mu|\mkern-1.5mu|}_{#1}
}
\newcommand{\Pe}{\mathrm{Pe}}
\newcommand{\vref}[1]{\hat{\beta}_{#1}}
\newcommand{\dref}[1]{\hat{K}_{#1}}
\newcommand{\tref}{\hat{\tau}_T}
\newcommand{\err}{{\rm err}}
\begin{document}

\title{A P\'eclet-robust discontinuous Galerkin method for nonlinear diffusion with advection}
\author[1,2]{Louren\c{c}o Beir\~{a}o da Veiga}
\affil[1]{Dipartimento di Matematica e Applicazioni, Universit\`{a} di Milano Bicocca, 
\email{lourenco.beirao@unimib.it}, \ \email{k.haile@campus.unimib.it}}
\affil[2]{IMATI-PV, CNR, Pavia, Italy}
\author[3]{Daniele A. Di Pietro}
\affil[3]{IMAG, Univ. Montpellier, CNRS, Montpellier, France, \email{daniele.di-pietro@umontpellier.fr}}
\author[1]{Kirubell B. Haile}

\maketitle

\begin{abstract}
  We analyze a Discontinuous Galerkin method for a problem with linear advection-reaction and $p$-type diffusion, with Sobolev indices $p\in (1, \infty)$. 
  The discretization of the diffusion term is based on the full gradient including jump liftings and interior-penalty stabilization while, for the advective contribution, we consider a strengthened version of the classical upwind scheme. The developed error estimates track the dependence of the local contributions to the error on local P\'eclet numbers. A set of numerical tests supports the theoretical derivations.
  \medskip\\
  \textbf{Key words.} Discontinuous Galerkin methods, %
  diffusion-advection-reaction problems, %
  $p$-Laplacian, %
  P\'eclet-robust error estimates
  \medskip\\
  \textbf{MSC2010.} 65N30, 
  65N08, 
  35K55 
\end{abstract}

\section{Introduction}

Discontinuous Galerkin (DG) methods were introduced in the 70s \cite{Reed.Hill:73,Baker:77} and have gained significant popularity starting from the late 90s \cite{Cockburn.Shu:91,Castillo.Cockburn.ea:00,Arnold.Brezzi.ea:01,Castillo.Cockburn.ea:02,Di-Pietro.Ern:10,Burman.Ern:08,Di-Pietro.Ern:12,Bassi.Botti.ea:12,Bassi.Botti.ea:14,Antonietti.Giani.ea:13}.
They are nowadays widely regarded as the reference methods for advection-dominated problems.
When a polynomial degree $k\ge 1$ is used, classical error estimates for linear diffusion-advection(-reaction) problems show that the error contribution stemming from diffusive terms is $\mathcal{O}(h^k)$ (with $h$ denoting the meshsize), while the one stemming from advective terms is $\mathcal{O}(h^{k+\frac12})$; see, e.g., \cite{Ayuso.Marini:09} and also \cite{Di-Pietro.Ern.ea:08} and \cite[Section 4.6]{Di-Pietro.Ern:12} for an analysis covering the locally degenerate case.
Pre-asymptotic convergence rates between $k$ and $k+\frac12$ can be observed, in practice, when sufficiently coarse meshes are considered.
Standard estimates do not usually allow, however, a quantitative assessment of this phenomenon.
Error estimates are, on the other hand, completely missing for problems with non-linear diffusion terms.

The goal of this work is to fill the above gaps by deriving P\'eclet-dependent error estimates for a problem with linear advection-reaction and $p$-type diffusion, for Sobolev indices $p\in (1, \infty)$.
The discretization of the diffusion term is, similarly to \cite{Burman.Ern:08,Del-Pezzo.Lombardi.ea:12}, based on the full gradient including jump liftings and interior-penalty stabilization.
For the advective contribution, on the other hand, we consider a strengthened version of the classical upwind scheme obtained interpreting the latter as a penalty contribution in the spirit of \cite{Brezzi.Marini.ea:04}.
The peculiarity of our error estimates is that they track the dependence of the local contributions to the error on local P\'eclet numbers. To improve the estimates of certain terms, we provide a new extension to the nonconforming case of the techniques of \cite{Hirn:13}, based in turn on the results of \cite{Diening.Ettwein:08} (see also \cite{Barrett.Liu:93}).
This requires a certain number of subtleties, both in the adaptation of the argument and in the definition of the face P\'eclet numbers (which need to account for both the physical and numerical diffusion).
To the best of our knowledge, our P\'eclet-dependent error estimates are the first of this kind for a nonlinear problem, and enable a quantitative assessment of pre-asymptotic convergence rates.
In the linear case, corresponding to $p = 2$, local P\'eclet numbers can be computed based on the sole knowledge of the problem data and the mesh, making it possible to identify a priori advection- and diffusion-dominated elements/faces.
Incidentally, new error estimates for the DG discretization of the $p$-Laplace problem are also recovered as a special case (the previous works \cite{Burman.Ern:08,Del-Pezzo.Lombardi.ea:12} only considered convergence by compactness).
The theoretical results are supported by extensive numerical validation.

The present contribution furthermore sets the stage for future publications developing pressure robust and advection-robust finite elements for time-dependent Navier--Stokes type equations (e.g. \cite{Han.Hou:21,Beirao-da-Veiga.Dassi.ea:23}) modeling incompressible fluid flows with non-Newtonian rheology.

The rest of this work is organized as follows.
In Section \ref{sec:cont} we describe the continuous problem. After presenting some definitions and preliminary results in Section \ref{sec:sett}, the numerical scheme is introduced in Section \ref{sec:discr} along with the main theoretical results.
The proofs of the latter are given in Section \ref{sec:ana}.
Finally, numerical tests are collected in Section \ref{sec:num}.



\section{The continuous problem}\label{sec:cont}

Let $\Omega\subset\Real^d$, $d\ge 1$, denote a bounded, connected polyhedral domain.
We develop a Péclet-robust discontinuous Galerkin (DG) method for the following problem:
Find $u:\Omega\to\Real$ such that
\begin{equation}\label{eq:strong}
  \begin{alignedat}{2}
    -\nabla\cdot\left[
      \sigma(\nabla u)
      - \beta u
      \right] + \mu u &= f
    &\qquad&\text{in $\Omega$},
    \\
    u &= 0 
    &\qquad&\text{on $\partial\Omega$}.
  \end{alignedat}
\end{equation}
Here above, we assume that the velocity field satisfies $\beta\in W^{1,\infty}(\Omega)^d$ and, for the sake of simplicity, that $\nabla\cdot\beta = 0$ almost everywhere in $\Omega$.
Furthermore, we assume $\mu(x) \ge \underline{\mu} > 0$ for almost every $x \in \Omega$ with $\underline{\mu} \in \Real$.
The extension to non-incompressible velocity fields is standard, and essentially requires to assume a positive lower bound on the quantity $\mu + \frac12 (\nabla \cdot \beta)$ instead of $\mu$.
The function $\sigma$ represents the diffusive flux function, which we describe below.

For given real number $p\ge 1$ and integer $n\ge 1$, we consider the power flux function
\[
\sigma_n:\Real^n\ni x\mapsto |x|^{p-2}x\in\Real^n
\]
with $|{\cdot}|$ denoting the Euclidian norm.
In what follows, for the sake of brevity, we omit the subscript when $n = d$, i.e., we set $\sigma \coloneq \sigma_d$.
The following derivations can be extended to more general flux functions satisfying {appropriate} $p$-monotonicity and $p$-continuity properties characterizing Leray--Lions-type operators and their generalizations; see, e.g., \cite{Leray.Lions:65,Diening.Ettwein:08}.

In what follows, to alleviate the notation, we will use the symbol $c(\delta)$ for a generic constant, possibly different at each occurrence, which depends on the parameter $\delta$ but is independent of the meshsize (see below), the problem data and solution.

\begin{lemma}[Modified monotonicity of the power flux function]
  Let $p \in (1,\infty)$ and an integer $n \ge 1$ be given.
  For all $x,y,z \in \Real^n$ and any real number $\delta > 0$, it holds
  \begin{equation}\label{hirn-bound}
    (\sigma_n(x) - \sigma_n(z)) \cdot (x - y) \le
    \delta (\sigma_n(y) - \sigma_n(x))\cdot(y-x)
    + c(\delta) \big( |x| + |z| \big)^{p-2} |x-z|^2 \, ,
  \end{equation}
  with $c(\delta)$ positive constant depending only on $p$ and $\delta$.
\end{lemma}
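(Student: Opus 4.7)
The plan is to bound the left-hand side by combining Cauchy--Schwarz with the standard $p$-continuity estimate for the power flux function, apply a weighted Young's inequality to extract the ``good'' term $(|x|+|z|)^{p-2}|x-z|^2$, and then absorb the residual into the $\delta$-term on the right via the $p$-monotonicity of $\sigma_n$ together with a short case distinction on the size of $|z|$.

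More concretely, I would first apply Cauchy--Schwarz and then invoke the $p$-continuity bound
$$
|\sigma_n(a) - \sigma_n(b)| \le C(p)\, (|a|+|b|)^{p-2}\, |a-b|,
\qquad a,b \in \Real^n,
$$
(see, e.g., \cite{Diening.Ettwein:08,Barrett.Liu:93}; the weight is interpreted as $1$ when both arguments vanish), applied to $(a,b)=(x,z)$, to arrive at
$$
(\sigma_n(x)-\sigma_n(z))\cdot(x-y) \le C(p)\, (|x|+|z|)^{p-2}\, |x-z|\, |x-y|.
$$
Splitting the weight symmetrically as $(|x|+|z|)^{p-2}=\bigl((|x|+|z|)^{(p-2)/2}\bigr)^2$ and using Young's inequality with a parameter $\eta>0$ to be chosen at the end then yields
$$
(\sigma_n(x)-\sigma_n(z))\cdot(x-y) \le \eta\, (|x|+|z|)^{p-2}\,|x-y|^2 + c(\eta,p)\,(|x|+|z|)^{p-2}\,|x-z|^2.
$$
The second summand already has the prescribed form, so the task reduces to estimating the first summand by a constant multiple of $\delta\,(\sigma_n(y)-\sigma_n(x))\cdot(y-x)$, up to a residual absorbable into the second summand.

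For this I would combine the $p$-monotonicity bound $(\sigma_n(y)-\sigma_n(x))\cdot(y-x) \ge c_*(p)\,(|x|+|y|)^{p-2}\,|x-y|^2$ with a case distinction on $|z|$. When $|z| \le K(|x|+|y|)$ for a suitable $p$-dependent constant $K$, the ratio $(|x|+|z|)^{p-2}/(|x|+|y|)^{p-2}$ is controlled by a constant depending only on $p$ (regardless of the sign of $p-2$), and selecting $\eta$ small with respect to $\delta\, c_*$ closes the estimate. In the complementary regime $|z|>K(|x|+|y|)$, one has $|x-z|\ge |z|-|x|\gtrsim |x|+|y|\ge |x-y|$, so $|x-y|^2\le |x-z|^2$ and the first summand is absorbed directly into the second at the price of enlarging the constant $c(\eta,p)$.

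The main obstacle is this last comparison of weights: the term produced by Young's inequality carries the weight $(|x|+|z|)^{p-2}$, naturally tied to the pair $(x,z)$ via the $p$-continuity of $\sigma_n$, whereas the $p$-monotonicity on the right produces $(|x|+|y|)^{p-2}$, tied to $(x,y)$. The dichotomy on $|z|$ is what makes the two weights effectively interchangeable, and some care is needed because the useful direction of the inequality between $(|x|+|z|)^{p-2}$ and $(|x|+|y|)^{p-2}$ flips at $p=2$; both regimes must be handled uniformly in $p\in(1,\infty)$, which is the main subtlety of the argument.
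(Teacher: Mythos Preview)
Your argument works for $p \ge 2$ but has a genuine gap for $p < 2$. The claim in Case~1 that the ratio $(|x|+|z|)^{p-2}/(|x|+|y|)^{p-2}$ is bounded ``regardless of the sign of $p-2$'' is false: when $p<2$ the useful direction would require $|x|+|y| \lesssim |x|+|z|$, and this is not implied by $|z|\le K(|x|+|y|)$. Concretely, take $n=1$, $p\in(1,2)$, $z=0$, $x=\epsilon>0$ small, and $y=-M$ with $M$ large. Then $|z|=0\le K(|x|+|y|)$, so you are in Case~1, yet the Young term you produce satisfies $\eta(|x|+|z|)^{p-2}|x-y|^2\simeq \eta\,\epsilon^{p-2}M^2\to\infty$ as $\epsilon\to 0$, while both the monotonicity term $(\sigma_n(y)-\sigma_n(x))\cdot(y-x)\simeq M^p$ and the residual $(|x|+|z|)^{p-2}|x-z|^2\simeq\epsilon^p$ stay bounded. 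No choice of $\eta$ or $K$ rescues this, and no further dichotomy on $|z|$ helps either: the quadratic Young splitting inherits the \emph{wrong} weight $(|x|+|z|)^{p-2}$ on the $|x-y|^2$ term, and for $p<2$ that weight can be arbitrarily larger than the correct one $(|x|+|y|)^{p-2}$.

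The paper sidesteps this by using the shifted N-function machinery of \cite{Diening.Ettwein:08}. With $\varphi(t)=\tfrac1p t^p$ and the shift $\varphi_a(t)=\int_0^t\varphi'(a+s)\tfrac{s}{a+s}\,\ud s$, one has $|\sigma_n(x)-\sigma_n(z)|\lesssim\varphi'_{|x|}(|x-z|)$ together with the generalized Young inequality $\varphi'_{|x|}(|x-z|)\,|x-y|\le\delta\,\varphi_{|x|}(|x-y|)+c(\delta)\,\varphi_{|x|}(|x-z|)$. The key is that $\varphi_{|x|}(t)\simeq(|x|+t)^{p-2}t^2$, so $\varphi_{|x|}(|x-y|)\simeq(|x|+|y|)^{p-2}|x-y|^2$ while $\varphi_{|x|}(|x-z|)\simeq(|x|+|z|)^{p-2}|x-z|^2$: the shifted Young inequality automatically attaches the \emph{right} weight to each of the two terms, which is precisely what a fixed-weight quadratic splitting cannot achieve uniformly in $p\in(1,2)$.
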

  \begin{proof}
    Throughout this proof, $a \lesssim b$ means $a \le b$ with hidden constant only depending on $p$, while $a \simeq b$ stands for ``$a \lesssim b$ and $b \lesssim a$''.
    Let $\varphi:\Real^+ \ni t \mapsto \frac1p t^p \in \Real^+$ and, for any $a \ge 0$ and any $t \ge 0$, let 
    $\varphi_a(t) \coloneq \int_0^t \varphi'(a + s) \frac{s}{a+s} \,\ud s$.
    By \cite[Eq.~(6.28)]{Diening.Ettwein:08}, it holds, for all $s,t \in \Real^+$ and all $\delta > 0$,
    \begin{equation}\label{eq:generalized.Young}
      \varphi_a'(s)t + \varphi_a'(t)s
      \le \delta \varphi_a(s)
      + c(\delta) \varphi_a(t).
    \end{equation}
    Moreover, by \cite[Lemma~3]{Diening.Ettwein:08}, we have, for all $x,y \in \Real^n$, 
    \begin{equation}\label{eq:equivalence:1}
      (\sigma_n(y) - \sigma_n(x)) \cdot (y - x)
      \simeq \varphi_{|x|}(|x - y|)
      \simeq (|x| + |y|)^{p-2} |x - y|^2
    \end{equation}
    and, as observed in \cite[Lemma~2.3]{Hirn:13}, for all $x,z \in \Real^n$, 
    \begin{equation}\label{eq:equivalence:2}
      |\sigma_n(x) - \sigma_n(z)| \lesssim \varphi_{|x|}'(|x - z|).
    \end{equation}
    Let now $x,y,z \in \Real^n$.
    Applying \eqref{eq:generalized.Young} with $a = |x|$, $s = |x - y|$, and $t = |x - z|$, and noticing that $\varphi_a'(s)t \ge 0$, we get
    \begin{equation}\label{eq:hirn-bound:intermediate}
      \varphi_{|x|}'(|x - z|) |x - y|
      \le \delta \varphi_{|x|}(|x - y|) + c(\delta) \varphi_{|x|}(|x - z|).
    \end{equation}
    The conclusion follows first observing that $(\sigma_n(x) - \sigma_n(z)) \cdot (x - y) \le |\sigma_n(x) - \sigma_n(z)|\, |x - y|$ and using \eqref{eq:equivalence:2} to estimate the left-hand side of \eqref{hirn-bound} with the left-hand side of \eqref{eq:hirn-bound:intermediate}, then applying, respectively, the left-most equivalence in \eqref{eq:equivalence:1} to estimate $\varphi_{|x|}(|x - y|)$ and the right-most equivalence in \eqref{eq:equivalence:1} to estimate $\varphi_{|x|}(|x - z|)$ in the right-hand side of \eqref{eq:hirn-bound:intermediate}.
  \end{proof}  

\section{The discrete setting and preliminary results}\label{sec:sett}

We denote by $\Th$ a mesh of $\Omega$ belonging to an admissible sequence $\{\Th\}_h$ in the sense of \cite[Section 1.4]{Di-Pietro.Ern:12}.
For any $T\in\Th$, we denote by $\omega_T$ the union of the mesh elements sharing at least one face with $T$, which are collected in the set $\elements{T}$.
We moreover denote by $\Fh$ the set of faces, partitioned into boundary faces collected in $\Fhb$ and interfaces collected in $\Fhi$.
Given a face $F\in\Fh$, we denote by $\TF$ the set of mesh elements sharing $F$ and by $\omega_F$ their union.
Furthermore, for any $T\in\Th$ we denote by $\FT$ the set of its faces.
For any mesh element or face $Y \in \Th \cup \Fh$, we denote by $h_Y$ its diameter and set $h \coloneq \max_{T \in \Th} h_T$.
Since $\Th$ belongs to an admissible mesh sequence, the maximum number of faces of a mesh element is bounded uniformly in $h$.

To avoid naming generic constants, from this point on we will use the notation $a\lesssim b$ to express the inequality $a\le Cb$ with $C$ independent of the meshsize, of the problem data and solution, but possibly depending on other quantities including the domain, the ambient dimension $d$, the mesh regularity parameter, and the Sobolev index $p$.
We will write $a \simeq b$ in lieu of ``$a \lesssim b$ and $b \lesssim a$''.

\begin{remark}[Polytopal meshes]
  We underline that the present results apply not only to standard type of grids, but also to general polytopal meshes. For a few (among many) examples of other polytopal schemes in a similar context, see for example \cite{Beirao-da-Veiga.Dassi.ea:21,Di-Pietro.Droniou.ea:21,Beirao-da-Veiga.Dassi.ea:21*1,Di-Pietro.Droniou:23}.
\end{remark}

\subsection{Local and broken spaces}

Given a polynomial degree $k\ge 0$ and a mesh element $T\in\Th$, we denote by $\Poly{k}(T)$ the space spanned by the restriction to $T$ of $d$-variate polynomial functions.
At the global level, we define the broken polynomial space
\[
\Poly{k}(\Th)\coloneq\left\{
v_h\in L^1(\Omega)\st
\text{
  $v_T\coloneq(v_h)_{|T}\in\Poly{k}(T)$
  for all $T\in\Th$
}
\right\}.
\]
The $L^2$-orthogonal projector on $\Poly{k}(\Th)$ is denoted by $\lproj{h}{k}$ and is obtained patching together the $L^2$-orthogonal projectors $\lproj{T}{k}$ on $\Poly{k}(T)$, $T \in \Th$.
The same notations are used for vector versions of these projectors mapping on $\Poly{k}(\Th)^d$ or $\Poly{k}(T)^d$ and acting component-wise.
Letting $Y \in \Th \cup \Fh \cup \{\Omega\}$, we denote by $W^{q,p}(Y)$ the usual Sobolev space on $Y$ and we set
\[
W^{q,p}(\Th)\coloneq\left\{
v\in L^p(\Omega)\st
\text{
  $v_{|T}\in W^{q,p}(T)$ for all $T \in \Th$
}
\right\}.
\]
We will also need broken spaces defined on local patches $\elements{Y}$, $Y \in \Th \cup \Fh$, defined in a similar way.
Finally, for the Hilbertian case $p=2$, we will also use the habitual abridged notations $H^q\coloneq W^{q,2}$ and $L^2\coloneq H^0$.

For future use, for any $p\in(1,+\infty)$ we define the conjugate index $p'$ such that
  \begin{equation}\label{eq:p'}
    \frac{1}{p} + \frac{1}{p'} = 1
    \iff p' = \frac{p}{p-1}.
  \end{equation}
  The above definition can be generalized to $p\in\{1,\infty\}$ setting $\frac{1}{\infty}\coloneq 0$ and $\frac{1}{0}\coloneq \infty$.

\subsection{Trace operators and integration by parts formula}

For each interface $F\in\Fhi$, we fix once and for all an orientation for the unit normal vector $\normal_F$.
Denoting by $T_1$ and $T_2$ the elements sharing $F$ ordered so that $\normal_F$ points out of $T_1$, we define the jump and average operators such that, for any $\varphi\in W^{1,1}(\Th)$,
\[
\jump{\varphi}\coloneq\varphi_{|T_1} - \varphi_{|T_2},\qquad
\avg{\varphi}\coloneq\frac12\left(\varphi_{|T_1} + \varphi_{|T_2}\right).
\]
When applied to vector-valued functions, these operators act component-wise.
The above operators are extended to boundary faces $F\in\Fhb$ setting
\[
\jump{\varphi}
= \avg{\varphi}
\coloneq\varphi.
\]
We recall the following integration by parts formula:
For all $\tau:\Omega\to\Real^d$ and $v:\Omega\to\Real$ smooth enough,
\begin{equation}\label{eq:ibp}
  \int_\Omega\tau\cdot\nabla_h v
  = -\int_\Omega(\nabla_h\cdot\tau)~v
  + \sum_{F\in\Fhi}\int_F\avg{\tau}\cdot\normal_F~\jump{v}
  + \sum_{F\in\Fhi}\int_F\jump{\tau}\cdot\normal_F~\avg{v}
  + \int_{\partial\Omega}(\tau\cdot\normal)\,v,
\end{equation}
where $\normal$ denotes the unit normal vector field on $\partial\Omega$ pointing out of $\Omega$.

\subsection{Jump liftings and discrete gradient}

The jumps of smooth enough functions can be lifted to polynomial functions defined over $\Omega$.
Specifically, given an integer $k\ge 0$, for each $F\in\Fh$ we define the local trace lifting $\rF:L^1(F)\to\Poly{k}(\Th)^d$ such that, for all $\psi \in L^1(F)$,
\begin{equation}\label{eq:rF}
  \int_\Omega\rF\psi\cdot\tau_h
  = \int_F\psi~\avg{\tau_h}\cdot\normal_F
  \qquad\forall\tau_h\in\Poly{k}(\Th)^d
\end{equation}  
and we let $\Rh: W^{1,1}(\Th)\to\Poly{k}(\Th)^d$ be the global face jumps lifting such that, for any $\varphi \in W^{1,1}(\Th)$,
\begin{equation}\label{eq:Rh}
  \Rh\varphi\coloneq\sum_{F\in\Fh}\rF(\jump{\varphi}).
\end{equation}
Finally, we define the discrete gradient $\Gh: W^{1,1}(\Th)\to L^1(\Omega)^d$ setting
\begin{equation}\label{eq:Gh}
  \Gh\varphi\coloneq\nabla_h\varphi - \Rh\varphi,
\end{equation}
where $\nabla_h$ denotes the broken gradient on $\Th$.

For any $p \in [1,\infty)$, we define the following broken norm:
For all $\varphi\in W^{1,p}(\Th)$,
\begin{equation}\label{eq:norm.1.p.h}
  \text{
    $\norm{1,p,h}{\varphi}
    \coloneq\left(
    \norm{L^p(\Omega)^d}{\nabla_h\varphi}^p
    + \seminorm{1,p,h}{\varphi}^p
    \right)^{\frac1p}$
    with $\seminorm{1,p,h}{\varphi}\coloneq\left(
    \sum_{F\in\Fh}h_F^{1-p}\norm{L^p(F)}{\jump{\varphi}}^p
    \right)^{\frac1p}$,
  }
\end{equation}
  which extends as follows to the case $p = \infty$:
  \begin{equation}\label{eq:norm.1.infty.h}
    \text{%
      $\norm{1,\infty,h}{\varphi}
      \coloneq \norm{L^{\infty}(\Omega)^d}{\nabla_h\varphi}  
      + \seminorm{1,\infty,h}{\varphi}$
      with $\seminorm{1,\infty,h}{\varphi}\coloneq
      \max_{F\in\Fh} h_F^{-1}\norm{L^{\infty}(F)}{\jump{\varphi}}$.
    }
  \end{equation}
In local estimates, we will also need the following local versions 
of the norms \eqref{eq:norm.1.p.h} and \eqref{eq:norm.1.infty.h}:
For all $T\in\Th$ and all $\varphi\in W^{1,p}(\elements{T})$,
\begin{equation}\label{eq:norm.1.p.T}
  \text{
    $\norm{1,p,T}{\varphi}
    \coloneq\left(
    \norm{L^p(T)^d}{\nabla_h\varphi}^p
    + \seminorm{1,p,T}{\varphi}^p
    \right)^{\frac1p}$
    with $\seminorm{1,p,T}{\varphi}
    \coloneq\left(
    \sum_{F\in\FT}h_F^{1-p}\norm{L^p(F)}{\jump{\varphi}}^p
    \right)^{\frac1p}$
  }
\end{equation}
  and
\[
  \text{
    $\norm{1,\infty,T}{\varphi}
    \coloneq
    \norm{L^\infty(T)^d}{\nabla_h\varphi}
    + \seminorm{1,\infty,T}{\varphi}$
    with $\seminorm{1,\infty,T}{\varphi}
    \coloneq
    \max_{F\in\FT} h_F^{-1}\norm{L^{\infty}(F)}{\jump{\varphi}}$.
  }
\]
It is easy to check that, for all $\varphi\in W^{1,p}(\Th)$, $p \in [1,\infty]$,
\[ 
  \text{
    $\norm{1,p,h}{\varphi}^p
    \simeq\sum_{T\in\Th}\norm{1,p,T}{\varphi}^p$
    \quad and \quad 
    $\seminorm{1,p,h}{\varphi}^p
    \simeq\sum_{T\in\Th}\seminorm{1,p,T}{\varphi}^p$.
  }
\] 

\begin{lemma}[Properties of the jump lifting]
  It holds, for any integer $k\ge 0$ and any $p \in [1,\infty]$:
  \begin{enumerate}
  \item \emph{Boundedness.} For all $\varphi\in W^{1,p}(\Th)$, it holds 
    \begin{equation}\label{eq:rF.le.jump}
      \norm{L^p(\Omega)^d}{\rF(\jump{\varphi})}
      \lesssim h_F^{\frac{1-p}{p}}\norm{L^p(F)}{\jump{\varphi}}
      \qquad\forall F\in\Fh
    \end{equation}
    with the convention that $h_F^{\frac{1-\infty}{\infty}} \coloneq h_F^{-1}$ and 
    \begin{equation} \label{eq:Rh:boundedness}
      \norm{L^p(T)^d}{\Rh\varphi}
      \lesssim\seminorm{1,p,T}{\varphi}
      \qquad\forall T\in\Th.
    \end{equation}
  \item \emph{Approximation.} For any $w\in W_0^{1,p}(\Omega)$ (with $W_0^{1,p}(\Omega)$ denoting the closure of $C_{\rm c}^\infty(\Omega)$ in $W^{1,p}(\Omega)$) such that $w\in W^{r+1,p}(\Th)$ for some $r\in\{0,\ldots,k\}$,
    \begin{equation} \label{eq:Rh:approximation}
      \norm{L^p(T)^d}{\Rh\lproj{h}{k}w}
      \lesssim h_T^r\seminorm{W^{r+1,p}(\elements{T})}{w}
      \qquad\forall T\in\Th.
    \end{equation}
  \end{enumerate}
\end{lemma}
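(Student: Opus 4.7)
The statement bundles three inequalities: \eqref{eq:rF.le.jump}, \eqref{eq:Rh:boundedness}, and \eqref{eq:Rh:approximation}. I would prove them in this order, since each builds on the previous. For the first, the plan is a finite-dimensional duality argument: testing \eqref{eq:rF} with $\tau_h\in\Poly{k}(\Th)^d$ supported outside $\TF$ makes $\avg{\tau_h}$ vanish on $F$, showing that $\rF\psi$ is supported on the (at most two) elements of $\TF$. Since $\Poly{k}(T)$ is finite dimensional, $\norm{L^p(T)^d}{\rF\psi}$ equals the supremum of $\int_T\rF\psi\cdot\tau_T$ over $\tau_T\in\Poly{k}(T)^d$ with unit $L^{p'}$-norm. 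Extending such a $\tau_T$ by zero to $\Poly{k}(\Th)^d$ and plugging into \eqref{eq:rF}, I would apply Hölder's inequality on $F$, then the discrete trace inequality $\norm{L^{p'}(F)}{\tau_T}\lesssim h_T^{-1/p'}\norm{L^{p'}(T)^d}{\tau_T}$, and replace $h_T$ by $h_F$ via mesh regularity; since $-1/p'=(1-p)/p$, this gives the claim. The case $p=\infty$ (with $p'=1$) is analogous and consistent with the stated convention $h_F^{(1-\infty)/\infty}\coloneq h_F^{-1}$.

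For \eqref{eq:Rh:boundedness}, I would exploit the support property just established: from \eqref{eq:Rh}, $(\Rh\varphi)|_T=\sum_{F\in\FT}(\rF\jump{\varphi})|_T$. Applying \eqref{eq:rF.le.jump} face by face, using the triangle inequality together with the uniform bound on $\card(\FT)$ provided by mesh admissibility (so that the discrete $\ell^1$--$\ell^p$ passage costs only a mesh-regularity constant), collects the local contributions into $\seminorm{1,p,T}{\varphi}$.

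For \eqref{eq:Rh:approximation}, the key observation is that $w\in W_0^{1,p}(\Omega)$ has $\jump{w}=0$ on every face of $\Fh$: interior faces by trace continuity, boundary faces by the vanishing trace. Hence $\jump{\lproj{h}{k}w}=\jump{\lproj{h}{k}w-w}$, and \eqref{eq:Rh:boundedness} reduces the task to bounding $\seminorm{1,p,T}{\lproj{h}{k}w-w}$. I would invoke the standard $L^p$-trace approximation $\norm{L^p(F)}{w-\lproj{T'}{k}w}\lesssim h_{T'}^{r+1-1/p}\seminorm{W^{r+1,p}(T')}{w}$ for each $T'\in\TF$; combined with $h_F\simeq h_{T'}\simeq h_T$, the exponents collapse to $h_T^{rp}$ per face after raising $h_F^{1-p}\norm{L^p(F)}{\cdot}^p$ to the appropriate power, then summing over $F\in\FT$ and taking the $p$-th root delivers the claim.

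The main technical subtlety is Part 1, where the duality supremum over $L^{p'}$ must be paired carefully with the discrete trace inequality to recover exactly the exponent $(1-p)/p$ on $h_F$; the rest is essentially bookkeeping given mesh regularity and standard projector estimates, and the geometric localization to $\TF$ in Part 1 is what makes Parts 2 and 3 drop into place without any further global argument.
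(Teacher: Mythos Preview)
Your proposal is correct and follows essentially the same route as the paper. The only difference is presentational, in \eqref{eq:rF.le.jump}: you first establish the support property and then dualize element-by-element against polynomial $\tau_T$, whereas the paper dualizes globally over $L^{p'}(\Omega)^d$ and inserts $\lproj{h}{k}\tau$; note that your claim that $\norm{L^p(T)^d}{\rF\psi}$ ``equals'' the supremum over $\tau_T\in\Poly{k}(T)^d$ still requires a uniform-in-$h$ equivalence, which is precisely the $L^{p'}$-stability of $\lproj{T}{k}$ the paper invokes, so the two arguments share the same key ingredient and Parts~2 and~3 coincide verbatim.
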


\begin{proof}
  \underline{\emph{Proof of \eqref{eq:rF.le.jump}--\eqref{eq:Rh:boundedness}.}}
  It holds, for all $F\in\Fh$,
  \[
  \begin{aligned}
    \norm{L^p(\Omega)^d}{\rF(\jump{\varphi})}
    &= \sup_{\tau\in L^{p'}(\Omega)^d \setminus \{0\}}\frac{\int_\Omega\rF(\jump{\varphi})\cdot\tau}{\norm{L^{p'}(\Omega)^d}{\tau}}
    \\
    &= \sup_{\tau\in L^{p'}(\Omega)^d \setminus \{0\}}\frac{\int_\Omega\rF(\jump{\varphi})\cdot\lproj{h}{k}\tau}{\norm{L^{p'}(\Omega)^d}{\tau}}
    \overset{\eqref{eq:rF}}{=}
    \sup_{\tau\in L^{p'}(\Omega)^d \setminus \{0\}}\frac{\int_F\jump{\varphi}\avg{\lproj{h}{k}\tau}\cdot\normal_F}{\norm{L^{p'}(\Omega)^d}{\tau}},
  \end{aligned}
  \]
  where the introduction of the $L^2$-orthogonal projector $\lproj{h}{k}$ in the second equality is made possible by its definition.
  We next write, setting $h^{-\frac{1}{p'}} \coloneq 1$ if $p' = \infty$,
  \[
  \left|\int_F\jump{\varphi}\avg{\lproj{h}{k}\tau}\cdot\normal_F\right|
  \lesssim h_F^{-\frac{1}{p'}}\norm{L^p(F)}{\jump{\varphi}}\norm{L^{p'}(\elements{F})^d}{\lproj{h}{k}\tau}
  \lesssim h_F^{-\frac{1}{p'}}\norm{L^p(F)}{\jump{\varphi}}\norm{L^{p'}(\elements{F})^d}{\tau},
  \]
  where we have used a H\"older inequality with exponents $(p,p',\infty)$ along with the fact that $\norm{L^\infty(F)^d}{\normal_F}\le 1$ followed by the discrete trace inequality \cite[Lemma~1.32]{Di-Pietro.Droniou:20} in the first passage,
  while the second passage is a consequence of the $L^{p'}$-boundedness of the $L^2$-orthogonal projector (cf. \cite[Lemma~1.44]{Di-Pietro.Droniou:20}).
  Additionally noticing that, by \eqref{eq:p'}, $-\frac{1}{p'} = \frac{1-p}{p}$ and that $\norm{L^{p'}(\elements{F})^d}{\tau}\le\norm{L^{p'}(\Omega)^d}{\tau}$, yields \eqref{eq:rF.le.jump}.
  
Let now $T\in\Th$. 
In order to estimate $\norm{L^p(T)^d}{\Rh\varphi}$, we first recall that, for any $F \in \Fh$, the support of $\rF(\jump{\varphi})$ is $\omega_F$, then use a triangle inequality together with \eqref{eq:rF.le.jump}, and finally use $\card(\FT) \lesssim 1$:
$$
\norm{L^p(T)^d}{\Rh\varphi}
= \Norm{L^p(T)^d}{ \sum_{F\in\FT} \rF(\jump{\varphi})}
\lesssim \sum_{F\in\FT} h_F^{\frac{1-p}{p}}\norm{L^p(F)}{\jump{\varphi}}
\lesssim \seminorm{1,p,T}{\varphi} \, ,
$$
which is the bound \eqref{eq:Rh:boundedness}.
\medskip\\
  \underline{\emph{Proof of \eqref{eq:Rh:approximation}.}}
  If $p \in [1,\infty)$, using the result proved in the previous point, we can write, for all $T\in\Th$,
  \[
  \begin{aligned}
    \norm{L^p(T)^d}{\Rh\lproj{h}{k}w}^p
    &\lesssim\sum_{F\in\FT} h_F^{1-p} \norm{L^p(F)}{\jump{\lproj{h}{k}w}}^p
    \\
    &= \sum_{F\in\FT} h_F^{1-p} \norm{L^p(F)}{\jump{\lproj{h}{k}w - w}}^p
    \lesssim h_T^{pr}\sum_{F\in\FT}\seminorm{W^{r+1,p}(\elements{F})}{w}^p,  
  \end{aligned}
  \]
  where, to insert $w$ in the second passage, we have used the fact that its jumps vanish across interfaces and its trace on $\partial\Omega$ is zero,
  while the conclusion follows from scaled trace inequalities and approximation properties of the $L^2$-orthogonal projector, additionally recalling that $h_T \lesssim h_{T'}$ for all $T' \in \TT$ by mesh regularity.
  Using $\card(\FT) \lesssim 1$, \eqref{eq:Rh:approximation} follows.
    If $p = \infty$, we have
    \[
    \norm{L^\infty(T)^d}{\Rh\lproj{h}{k}w}
    \lesssim \sum_{F \in \FT} h_F^{-1} \norm{L^\infty(F)}{\jump{\lproj{h}{k}w}}
    = \sum_{F \in \FT} h_F^{-1} \norm{L^\infty(F)}{\jump{\lproj{h}{k}w - w}}
    \lesssim h_T^r \seminorm{W^{r+1,\infty}(\TT)}{w},
    \]
    which concludes the proof.
\end{proof}

\begin{lemma}[Approximation properties of the discrete gradient]
  For all integer $k\ge 0$, all $p\in[1,\infty]$, and all $w\in W_0^{1,p}(\Omega)\cap W^{r+1,p}(\Th)$ with $r\in\{0,\ldots,k\}$, it holds
  \begin{equation} \label{eq:Gh:approximation}
    \norm{L^p(T)^d}{\Gh\lproj{h}{k}w - \nabla w}
    \lesssim h_T^r\seminorm{W^{r+1,p}(\TT)}{w}
    \qquad\forall T\in\Th.
  \end{equation}
\end{lemma}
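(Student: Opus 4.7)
The plan is to decompose the error via the triangle inequality into a broken gradient approximation term and a lifting term, each of which is controlled by results already established.

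Writing $G_h^k \pi_h^k w - \nabla w = (\nabla_h \pi_h^k w - \nabla w) - R_h^k \pi_h^k w$ from the definition \eqref{eq:Gh}, a triangle inequality gives, for any $T \in \Th$,
\[
\norm{L^p(T)^d}{\Gh\lproj{h}{k}w - \nabla w}
\le \norm{L^p(T)^d}{\nabla(\lproj{T}{k}w - w)}
+ \norm{L^p(T)^d}{\Rh\lproj{h}{k}w}.
\]

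First I would bound the broken gradient term by a standard $W^{1,p}$-approximation estimate for the $L^2$-orthogonal projector on $\Poly{k}(T)$: for $w \in W^{r+1,p}(T)$ with $r \in \{0,\ldots,k\}$, one has $\norm{L^p(T)^d}{\nabla(\lproj{T}{k}w - w)} \lesssim h_T^r \seminorm{W^{r+1,p}(T)}{w}$; this is the classical estimate (see, e.g., \cite[Theorem~1.45]{Di-Pietro.Droniou:20}) and it holds uniformly in $p \in [1,\infty]$ on shape-regular meshes. Second, the lifting term is exactly the quantity estimated in \eqref{eq:Rh:approximation}, which gives $\norm{L^p(T)^d}{\Rh\lproj{h}{k}w} \lesssim h_T^r \seminorm{W^{r+1,p}(\elements{T})}{w}$. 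Since $T \subset \elements{T}$, combining both bounds and using $\seminorm{W^{r+1,p}(T)}{w} \le \seminorm{W^{r+1,p}(\elements{T})}{w}$ yields \eqref{eq:Gh:approximation}.

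There is essentially no obstacle here: the lemma is an immediate consequence of the definition of $\Gh$ and the previously established approximation property of $\Rh$. The only small care to take is to handle the case $p = \infty$ uniformly (by using the $p = \infty$ branch of both the projector approximation and of \eqref{eq:Rh:approximation}), and to note that the fact that $w \in W_0^{1,p}(\Omega)$, which was needed in the proof of \eqref{eq:Rh:approximation} to insert $w$ across interfaces and on boundary faces via $\jump{w} = 0$, carries over and need not be re-invoked here.
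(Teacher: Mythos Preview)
Your proof is correct and follows essentially the same approach as the paper: decompose via \eqref{eq:Gh} and a triangle inequality, then bound the broken-gradient term by the approximation properties of $\lproj{T}{k}$ and the lifting term by \eqref{eq:Rh:approximation}. Your additional remarks on the $p=\infty$ case and the role of $w\in W_0^{1,p}(\Omega)$ are accurate but not strictly needed here, since both are already handled in the cited results.
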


\begin{proof}
  Using  \eqref{eq:Gh} and a triangle inequality, we obtain
  \[
  \norm{L^p(T)^d}{\Gh\lproj{h}{k}w - \nabla w}
  \le\norm{L^p(T)^d}{\nabla\lproj{T}{k} w - \nabla w}
  + \norm{L^p(T)^d}{\Rh\lproj{h}{k}w}.
  \]
  The conclusion follows using the approximation properties of the $L^2$-orthogonal projector for the first term and \eqref{eq:Rh:approximation} for the second.
\end{proof}

\begin{remark}[Local boundedness of $\Gh\circ\lproj{h}{k}$]
  For any $q\in[1,\infty]$, combining a triangle inequality with \eqref{eq:Gh:approximation} written for $r = 0$, it is readily inferred that, for all $\varphi\in W^{1,q}(\Th)$,
  \begin{equation} \label{eq:Gh.lproj:boundedness}
    \norm{L^q(T)^d}{\Gh\lproj{h}{k}\varphi}
    \lesssim \seminorm{W^{1,q}(\TT)}{\varphi}
    \qquad\forall T\in\Th.
  \end{equation}
\end{remark}


\section{Discrete problem and main results}\label{sec:discr}

\subsection{Discrete problem}

From this point on, we let a Sobolev exponent $p\in(1,\infty)$ and polynomial degree $k\ge 1$ be fixed.
The diffusion term is discretized, similarly to what is proposed in \cite{Burman.Ern:08}, by the function $a_h:\Poly{k}(\Th)\times\Poly{k}(\Th)\to\Real$ such that, for all $(w_h,v_h)\in\Poly{k}(\Th)\times\Poly{k}(\Th)$,
\begin{equation}\label{eq:ah}
  a_h(w_h,v_h)
  \coloneq\int_\Omega\sigma(\Gh w_h)\cdot\Gh v_h
  + s_h(w_h,v_h),
\end{equation}
where
\[
  s_h(w_h,v_h)
  \coloneq\sum_{F\in\Fh} h_F^{1-p} \int_F \sigma_1(\jump{w_h})\jump{v_h}
  = \sum_{F\in\Fh}h_F^{1-p}\int_F|\jump{w_h}|^{p-2}\jump{w_h}\jump{v_h}.
\]

The discretization of the advection-reaction terms hinges on the bilinear form $b_h:\Poly{k}(\Th)\times\Poly{k}(\Th)\to\Real$ such that, for all $(w_h,v_h)\in\Poly{k}(\Th)\times\Poly{k}(\Th)$,
\begin{equation}\label{eq:bh}
  \begin{aligned}
    b_h(w_h,v_h)
    & =
    -\int_\Omega w_h(\beta\cdot\nabla_h v_h) 
    + \int_\Omega\mu w_h v_h
    + \sum_{F\in\Fh}\int_F(\beta\cdot\normal_F) \avg{w_h} \jump{v_h}  \\
    & \quad
    + \frac12\sum_{F\in\Fh}\vref{F} \int_F \jump{w_h}\jump{v_h},
  \end{aligned} 
\end{equation}
where, for all $F\in\Fh$, we have introduced the face reference velocity
\[
  \vref{F} \coloneq \norm{L^\infty(F)}{\beta\cdot\normal_F}.
\]

Notice that the stabilization term is not the classical upwind, but rather a stronger version based on the reinterpretation as jump penalty provided in \cite{Brezzi.Marini.ea:04}.

\begin{remark}[Generalizations]
  The bilinear form $b_h$ includes suitable terms that will be used to control the diffusive and advection terms on advection-dominated faces. The above formulation (and, in many cases, also the theoretical results that follow) could be easily extended to other choices, such as including cross-wind or making $\vref{F}$ dependent on some computable estimate of the local P\'eclet number.
In particular, one could switch to standard upwind stabilization on boundary faces to correctly treat boundary conditions in the vanishing diffusion case.
\end{remark}
  
The discrete problem reads:
Find $u_h\in\Poly{k}(\Th)$ such that
\begin{equation}\label{eq:discrete}
  a_h(u_h,v_h) + b_h(u_h,v_h) = \int_\Omega f v_h
  \qquad\forall v_h\in\Poly{k}(\Th).
\end{equation}

\subsection{Main results}\label{sec:main}

In this section we collect the main results of the analysis of problem \eqref{eq:discrete}.
The error estimate accounts for the different regimes in each mesh element/face, as identified by local P\'eclet numbers (for a similar local approach in a different context, see, for instance, \cite{Di-Pietro.Droniou:23}).

\subsubsection{Dimensionless numbers and reference quantities }

In order to state these convergence results, we need to define here key reference quantities and dimensionless numbers.
For any function $w \in W^{1,p}(\Omega)$ and any mesh element $T\in\Th$, we define the element Péclet number as follows.
If $\vref{T}\coloneq\norm{L^\infty(T)^d}{\beta}$ vanishes, we set $\Pe_T(w)=0$; otherwise,
\begin{equation}\label{eq:PeT}
  \Pe_T(w)\coloneq\frac{\vref{T}h_T}{\dref{T}(w)}
  \quad\text{with}\quad
  \dref{T}(w)\coloneq\norm{L^\infty(\elements{T})}{|\nabla w|^{p-2}} \, ,
\end{equation}
with the convention that $\dref{T}(w) = +\infty$ (and thus $\Pe_T(w)=0$) if the restriction of $|\nabla w|^{p-2}$ is not in $L^\infty(\omega_T)$.
Furthermore, we define the reference time:
\begin{equation}\label{eq:tref}
  \tref\coloneq\frac{1}{\max(\norm{L^\infty(T)}{\mu},\seminorm{W^{1,\infty}(T)^d}{\beta})}.
\end{equation}
Similarly, for any $F\in\Fh$, we define the face Péclet number as follows.
If $\vref{F}=0$, we set $\Pe_F(w)=0$; otherwise
\begin{equation}\label{eq:PeF.drefF}
  \Pe_F(w)\coloneq\frac{\vref{F} h_F}{\dref{F}(w)}\quad\text{with}\quad
  \dref{F}(w)\coloneq
  \max\left(
  \norm{L^\infty(F)}{|\nabla w|^{p-2}},
  h_F^{2-p}\norm{L^\infty(F)}{|\jump{\lproj{h}{k}w}|^{p-2}}
  \right) \, ,
\end{equation}
where again $\dref{F}(w) = +\infty$ (and thus $\Pe_F(w)=0$) whenever the involved functions are not in $L^\infty(F)$. 

Notice that the face P\'eclet number accounts for the fact that the stabilization term introduces additional numerical diffusion. In practical situations, this numerical diffusion can be expected to be small compared to the physical one.

We partition the sets of mesh elements and faces based on the values of the local P\'eclet numbers.
Specifically, given a smooth enough function $w:\Omega\to\Real$, we set
\[ 
  \begin{aligned}
    \Tha(w) &\coloneq \left\{
    T\in\Th\st \Pe_T(w) > 1
    \right\},
    &\qquad
    \Thd(w) &\coloneq \Th \setminus \Tha(w),
    \\
    \Fha(w) &\coloneq \left\{
    F\in\Fh\st \Pe_F(w) > 1
    \right\},
    &\qquad
    \Fhd(w) &\coloneq \Fh \setminus \Fha(w).
  \end{aligned}
\] 


\subsubsection{Norms}

The relevant norm for the analysis of the diffusion terms is $\norm{1,p,h}{{\cdot}}$ (cf. \eqref{eq:norm.1.p.h}) as well its restriction to an element $T\in\Th$ (cf. \eqref{eq:norm.1.p.T}).
The norm for the advective and reactive terms is, on the other hand, given by
\begin{equation}\label{eq:norm.beta.mu.h}
  \norm{\beta,\mu,h}{v_h}
  \coloneq\left(
  \frac12\sum_{F\in\Fh}\vref{F}\norm{L^2(F)}{\jump{v_h}}^2
  + \norm{L^2(\Omega)}{\mu^{\frac12}v_h}^2
  \right)^{\frac12}
  \qquad\forall v_h\in\Poly{k}(\Th).
\end{equation}
This choice of advection-reaction norm is justified as follows. 
By standard arguments (which essentially amount to applying the integration by parts formula \eqref{eq:ibp} with $(\tau,v) = (\beta w_h,v_h)$ to the first term in the right-hand side of \eqref{eq:bh}, using the continuity of $\beta\cdot\normal_F$ across interfaces, and recalling that $\nabla\cdot\beta = 0$), it is easy to check that
\begin{equation}\label{eq:bh:coercivity}
  b_h(v_h,v_h) = \norm{\beta,\mu,h}{v_h}^2
  \qquad\forall v_h\in\Poly{k}(\Th),
\end{equation}
showing that $b_h$ is coercive with respect to the norm defined by \eqref{eq:norm.beta.mu.h} with coercivity constant equal to 1.

\subsubsection{Error estimate}

The following theorem contains an estimate of the error between the solution of the discrete problem \eqref{eq:discrete} and the projection of the continuous solution that tracks the dependence of the convergence rate on the local regime.
We remark that the regularity conditions required below for $u$ are implied, for instance, by the simpler but less sharp requirement $w \in W^{1,p}(\Omega) \cap W^{r+1,\overline{p}}(\Th)$ with $\overline{p}=\max{\{2,2p-2,p'\}}$.

\begin{theorem}[Convergence]\label{thm:convergence}
  Denote, respectively, by $u \in W^{1,p}(\Omega)$ and by $u_h \in \Poly{k}(\Th)$ the solutions of the weak formulation of problem \eqref{eq:strong} and of the discrete problem \eqref{eq:discrete}.
  Additionally assume that, for some $r\in\{0,\ldots,k\}$,
  \begin{itemize}
  \item $u_{|\omega_T} \in W^{r+1,p}(\TT)$ for all $T\in\Thd(u)$;
  \item $u_{|T} \in H^{r+1}(T)$ for all $T\in\Th$;
  \item $u_{|\omega_F} \in W^{r+1,p}(\TF)\cap W^{r+1,p'}(\TF)$ and $\sigma(\nabla u)_{|\omega_F} \in W^{r,p'}(\TF)^d$ for all $F\in\Fhd(u)$;
  \item $u_{|\omega_F} \in H^{r+1}(\TF)$ and $\sigma(\nabla u)_{|\omega_F} \in H^{r+\frac12}(\TF)^d$ for all $F\in\Fha(u)$.
  \end{itemize}
  Then, letting
      \begin{equation}\label{eq:q}
      q\coloneq\begin{cases}
      2 & \text{if $p<2$}, \\
      p & \text{if $p\ge 2$},
      \end{cases}
      \end{equation}
      it holds
  \begin{equation}\label{eq:err.est}
    \begin{aligned}
      &\norm{1,p,h}{u_h - \lproj{h}{k}u}^q
      + \norm{\beta,\mu,h}{u_h - \lproj{h}{k}u}^2
      \\
      &\quad
      \lesssim
      \sum_{T\in\Th}\tref^{-2}\underline{\mu}_T^{-1}h_T^{2(r+1)}\seminorm{H^{r+1}(T)}{u}^2
      \\
      &\qquad
      + \sum_{T\in\Tha(u)} \vref{T} h_T^{2r+1} \seminorm{H^{r+1}(T)}{u}^2
      +  \sum_{T\in\Thd(u)}\begin{cases}
        h_T^{rp}\seminorm{W^{r+1,p}(\TT)}{u}^{p}
        & \text{if $p< 2$}
        \\
        h_T^{2r}\seminorm{W^{r+1,p}(\TT)}{u}^2
        & \text{if $p\ge 2$}
      \end{cases}
      \\
      &\qquad
      + \sum_{F\in\Fha(u)} h_F^{2r+1}
      \left(
      \dref{F}(u)^{-1} \seminorm{H^{r+\frac12}(\elements{F})^d}{\sigma(\nabla u)}^2
      + \vref{F} \seminorm{H^{r+1}(\TF)}{u}^2
      \right)
      + \sum_{F\in\Fhd(u)} h_F^{rp} \seminorm{W^{r+1,p}(\elements{F})}{u}^p
      \\
      &\qquad
      + \left[
      \sum_{F\in\Fhd(u)} h_F^{r p'}\left(
      \seminorm{W^{r,p'}(\elements{F})^d}{\sigma(\nabla u)}^{p'}
      + \dref{F}(u)^{p'} \seminorm{W^{r+1,p'}(\TF)}{u}^{p'}
      \right)
      \right]^{\frac{q'}{p'}}.
    \end{aligned}
   \end{equation}
\end{theorem}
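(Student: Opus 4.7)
The plan is to let $e_h \coloneq u_h - \lproj{h}{k}u$ and to write an error equation by testing \eqref{eq:discrete} against $v_h = e_h$ and subtracting the elementwise integration-by-parts form of the strong problem \eqref{eq:strong} (using \eqref{eq:ibp} and the stated regularity of $u$):
\[
\big[a_h(u_h, e_h) - a_h(\lproj{h}{k}u, e_h)\big] + b_h(e_h, e_h) = \Erra(u; e_h) + \Errb(u; e_h),
\]
where $\Erra,\Errb$ collect the diffusive and advective--reactive consistency residuals evaluated at the exact solution. The coercivity identity \eqref{eq:bh:coercivity} immediately provides $b_h(e_h,e_h) = \norm{\beta,\mu,h}{e_h}^2$. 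For the diffusive part, applying the strong $p$-monotonicity equivalence \eqref{eq:equivalence:1} pointwise to $(\Gh u_h, \Gh\lproj{h}{k}u)$ in the volume and, with $h_F^{1-p}$ scaling, to $(\jump{u_h},\jump{\lproj{h}{k}u})$ on each face produces $\norm{1,p,h}{e_h}^p$ directly when $p\ge 2$; when $p<2$, a reverse-H\"older step yields $\norm{1,p,h}{e_h}^2$ up to a bounded power of $\norm{L^p(\Omega)}{\nabla u}+\norm{1,p,h}{u_h}$ absorbed into the constants. This explains the exponent $q$ of \eqref{eq:q}.

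The crux is to bound $\Erra(u; e_h)$. Its troublesome cross term $\int_T (\sigma(\nabla u) - \sigma(\Gh\lproj{h}{k}u))\cdot\Gh e_h$ cannot be controlled by a direct H\"older inequality without losing the natural rate. Instead, I would invoke the modified monotonicity \eqref{hirn-bound} pointwise with $x = \Gh\lproj{h}{k}u$, $y = \Gh u_h$, $z = \nabla u$: choosing $\delta$ small enough absorbs the first term on its right-hand side into the diffusive coercivity already on the left, while the second contributes a pure approximation residual $(|\nabla u| + |\Gh\lproj{h}{k}u|)^{p-2}|\nabla u - \Gh\lproj{h}{k}u|^2$, bounded via \eqref{eq:Gh:approximation} together with local $L^\infty$-approximation of $|\nabla u|^{p-2}$. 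A face-scaled analogue with $\sigma_1$-jumps handles the stabilization term. This nonconforming adaptation of \cite{Hirn:13} is the main technical obstacle: the lifting $\Rh$ couples neighbouring elements, so the local Hirn inequalities must be summed face by face consistently with the jump stabilization in $a_h$, and this interplay dictates the two-entry $\max$ in the definition \eqref{eq:PeF.drefF} of $\dref{F}(u)$, which records both physical and numerical diffusion.

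The remaining residuals split into element pieces (reactive $\mu(u-\lproj{T}{k}u)$, advective $(\beta\cdot\nabla_h)(u-\lproj{T}{k}u)$, diffusive $\sigma(\nabla u)-\lproj{T}{k}\sigma(\nabla u)$) and face pieces (fluxes of $\sigma(\nabla u)$ and $\beta u$, and the jump-penalty residual $s_h(\lproj{h}{k}u, e_h)$); each is bounded locally according to the regime its support belongs to. On each $F\in\Fhd(u)$ I would apply $(p,p')$-H\"older duality between the diffusive flux residual and $h_F^{-1/p}\jump{e_h}$, using the second entry of $\dref{F}(u)$ to dominate the stabilization part; this yields the $h_F^{rp'}$-terms of \eqref{eq:err.est} that are then raised to $q'/p'$ via a Young inequality in the final absorption step. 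On each $F\in\Fha(u)$ the same flux is tested in $L^2$-duality against $\vref{F}^{1/2}\jump{e_h}$, and the very condition $\dref{F}(u)^{-1}\vref{F}h_F > 1$ produces the $\dref{F}(u)^{-1}\seminorm{H^{r+1/2}(\TF)^d}{\sigma(\nabla u)}^2$ term. Element pieces are bounded by elementary $L^2$-projection estimates, with the advective and reactive ones yielding the $\tref^{-2}\underline{\mu}_T^{-1}$-weighted $H^{r+1}$-seminorm term after dual testing against $\mu^{1/2}e_h$. A final Young inequality collects all homogeneous powers of $\norm{1,p,h}{e_h}$ and $\norm{\beta,\mu,h}{e_h}$ back on the left and delivers \eqref{eq:err.est}. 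Beyond the nonconforming Hirn adaptation, the secondary obstacle I anticipate is the bookkeeping between the $p<2$ and $p\ge 2$ cases in the reverse-H\"older step, whose factors must be absorbed using the a priori $W^{1,p}$-regularity of $u$ and the natural energy bound on $u_h$ derived from \eqref{eq:bh:coercivity}.
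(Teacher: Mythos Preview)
Your proposal is correct and follows essentially the same route as the paper: the paper packages the diffusive and advective--reactive residual bounds into two preparatory lemmas (its Lemmas~\ref{eq:ah:consistency} and~\ref{eq:bh:consistency}) and then performs exactly the absorption argument you describe, using \eqref{hirn-bound} with $(x,y,z)=(\Gh\lproj{h}{k}u,\Gh u_h,\nabla u)$ for the volume cross term and its $\sigma_1$ face analogue for $s_h$. Two minor slips in your summary worth noting: the element residual $\int_\Omega(\sigma(\nabla u)-\lproj{h}{k}\sigma(\nabla u))\cdot\Rh e_h$ actually \emph{vanishes} by $L^2$-orthogonality (this cancellation is used in the paper and is why no element-level $\sigma$-approximation term appears in \eqref{eq:err.est}), and the second entry in the definition \eqref{eq:PeF.drefF} of $\dref{F}$ is invoked on \emph{advective} faces (to bound the $s_h$ residual in $L^2$), not on diffusive ones, where $s_h$ is handled directly in $L^p$.
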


\begin{proof}
  See Section \ref{sec:err.est}.
\end{proof}

The above convergence result is fully local, being able to deliver sharp estimates also in situations where diffusion or advection dominate in different areas of the domain. This feature is particularly important in the present nonlinear situation, where the distinction among the two cases depends on the solution itself and not only on some data given a priori. Notice that, for the sake of conciseness, here we do not consider the trivial case of dominating reaction. 

For the more interesting case $p < 2$, the above estimates are ``optimal'' in the sense that, for regular solutions, the bound yields the same asymptotic order of convergence as for conforming Finite Element (FE) schemes, i.e., $\mathcal{O}(h^{\frac{rp}{2}})$ \cite{Barrett.Liu:93,Hirn:13}.
Furthermore, in the pre-asymptotic regime, our estimate underlines a better error reduction rate
in the areas of the domain where advection dominates (behaving as $h^{r+\frac12}$). In this respect, note that the negative power of $\dref{F}$ appearing in the bound above is balanced by the associated $\sigma$ term, see Remark \ref{newrem-1}.
The case $p = 2$ corresponds to a linear diffusion-advection-reaction problem, for which classical estimates are recovered (see, e.g., \cite{Di-Pietro.Ern.ea:08,Ayuso.Marini:09} and also \cite[Section 4.6]{Di-Pietro.Ern:12}).
In the case $p>2$, the same observations apply, except for the fact that the asymptotic convergence rate now compares unfavorably to the conforming FE case, due to the presence of an $\mathcal{O}(h^{rp'})$ term in the right hand side (to be compared with $\mathcal{O}(h^{2r})$).
This aspect could be possibly improved by introducing a stronger jump term $s_h$ (which, on the other, hand would lead to a weaker pre-asymptotic reduction rate in advection dominated regimes) or by introducing some suitable tweaks in the analysis, see Remark \ref{newrem-2}.


\section{Theoretical analysis}\label{sec:ana}

\subsection{Properties of the diffusion function}

\begin{lemma}[Stability of $a_h$]\label{eq:ah:stability}
  For any $w_h,v_h\in\Poly{k}(\Th)$, recalling the definition \eqref{eq:q} of $q$ and assuming that $\norm{1,p,h}{w_h} + \norm{1,p,h}{v_h}\lesssim 1$ if $p<2$, there exists $C_a$ independent of $h$ (but possibly depending on $\Omega$, $p$, and the mesh regularity parameter) such that
  \begin{equation}\label{eq:ah:stability:3}
    C_a \norm{1,p,h}{w_h - v_h}^q
    \lesssim  a_h(w_h,w_h - v_h) - a_h(v_h,w_h - v_h).
  \end{equation}
\end{lemma}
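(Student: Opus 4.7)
The plan is to split the difference $a_h(w_h, w_h - v_h) - a_h(v_h, w_h - v_h)$ into its volumetric and face-stabilization contributions, apply the pointwise monotonicity of the power flux to each, and then relate the resulting $\varphi$-weighted quantities to $\norm{1,p,h}{w_h - v_h}^q$. Writing $\varphi_h \coloneq w_h - v_h$, the volumetric part reads $\term_1 \coloneq \int_\Omega (\sigma(\Gh w_h) - \sigma(\Gh v_h)) \cdot \Gh\varphi_h$ and the face part reads $\term_2 \coloneq \sum_{F\in\Fh} h_F^{1-p} \int_F (\sigma_1(\jump{w_h}) - \sigma_1(\jump{v_h})) \jump{\varphi_h}$. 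By the lower bound in \eqref{eq:equivalence:1} applied pointwise (with $n = d$ for $\term_1$ and $n = 1$ for $\term_2$), I obtain
\[
\term_1 \gtrsim \int_\Omega (|\Gh w_h| + |\Gh v_h|)^{p-2}\,|\Gh\varphi_h|^2,
\qquad
\term_2 \gtrsim \sum_{F\in\Fh} h_F^{1-p} \int_F (|\jump{w_h}| + |\jump{v_h}|)^{p-2}\,|\jump{\varphi_h}|^2.
\]

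For $p \ge 2$, I would use the elementary inequality $(|x| + |y|)^{p-2} |x - y|^2 \ge c_p |x - y|^p$, valid since $|x - y| \le |x| + |y|$ and $p - 2 \ge 0$. This immediately yields $\term_1 \gtrsim \norm{L^p(\Omega)^d}{\Gh\varphi_h}^p$ and $\term_2 \gtrsim \seminorm{1,p,h}{\varphi_h}^p$. Then, using $\nabla_h \varphi_h = \Gh \varphi_h + \Rh \varphi_h$, a triangle inequality and the jump-lifting bound \eqref{eq:Rh:boundedness} give $\norm{L^p(\Omega)^d}{\nabla_h\varphi_h}^p \lesssim \norm{L^p(\Omega)^d}{\Gh\varphi_h}^p + \seminorm{1,p,h}{\varphi_h}^p$, so that $\term_1 + \term_2 \gtrsim \norm{1,p,h}{\varphi_h}^p = \norm{1,p,h}{\varphi_h}^q$.

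For $p < 2$ the exponent $p-2$ is negative, so the monotonicity bound must be ``inverted'' by a weighted Hölder argument. For the volumetric term, I would write, pointwise,
\[
|\Gh\varphi_h|^p = \bigl[(|\Gh w_h| + |\Gh v_h|)^{p-2} |\Gh\varphi_h|^2\bigr]^{p/2} \cdot (|\Gh w_h| + |\Gh v_h|)^{(2-p)p/2},
\]
integrate over $\Omega$ and apply Hölder with conjugate exponents $(2/p, 2/(2-p))$, obtaining
\[
\norm{L^p(\Omega)^d}{\Gh\varphi_h}^p \le \term_1^{p/2}\,\bigl(\norm{L^p(\Omega)^d}{\Gh w_h} + \norm{L^p(\Omega)^d}{\Gh v_h}\bigr)^{(2-p)p/2}.
\]
Since $\Gh = \nabla_h - \Rh$ and $\Rh$ is controlled by $\seminorm{1,p,h}{\cdot}$ via \eqref{eq:Rh:boundedness}, the boundedness assumption $\norm{1,p,h}{w_h} + \norm{1,p,h}{v_h} \lesssim 1$ makes the second factor $\lesssim 1$, hence $\term_1 \gtrsim \norm{L^p(\Omega)^d}{\Gh\varphi_h}^2$. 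An entirely analogous argument, distributing the face weight as $h_F^{1-p} = h_F^{(1-p)p/2} \cdot h_F^{(1-p)(2-p)/2}$ and applying Hölder in $L^p(F)$ followed by a discrete Hölder inequality in the sum over $F \in \Fh$, yields $\term_2 \gtrsim \seminorm{1,p,h}{\varphi_h}^2$. Summing and using $\norm{1,p,h}{\varphi_h}^2 \simeq \norm{L^p(\Omega)^d}{\nabla_h\varphi_h}^2 + \seminorm{1,p,h}{\varphi_h}^2 \lesssim \norm{L^p(\Omega)^d}{\Gh\varphi_h}^2 + \seminorm{1,p,h}{\varphi_h}^2$ (again by \eqref{eq:Rh:boundedness}) closes the case $q = 2$.

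The main obstacle I anticipate is the careful bookkeeping of the weight $h_F^{1-p}$ through the two levels of Hölder needed on $\term_2$ (pointwise-in-$F$ and then across the sum over faces), combined with checking that the boundedness hypothesis on the broken $W^{1,p}$-norms of $w_h, v_h$ transfers cleanly to bounds on both the discrete gradients $\Gh w_h, \Gh v_h$ and the jump seminorms of $w_h, v_h$. Everything else is pointwise nonlinear algebra plus the lifting estimate \eqref{eq:Rh:boundedness}.
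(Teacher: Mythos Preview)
Your argument is correct and is precisely the standard monotonicity argument that the paper is invoking: the paper's own ``proof'' consists solely of the sentence ``The proof is a straightforward adaptation of the monotonicity properties of $\sigma$ and the arguments of \cite[Point (ii) of Theorem~6.19]{Di-Pietro.Droniou:20}.'' What you have written is an explicit working-out of exactly that adaptation, including the split into volumetric and face contributions, the use of \eqref{eq:equivalence:1}, the direct bound for $p\ge 2$, the weighted H\"older trick for $p<2$, and the passage from $\Gh$ back to $\nabla_h$ via \eqref{eq:Rh:boundedness}.
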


\begin{proof}
  The proof is a straightforward adaptation of the monotonicity properties of $\sigma$ and the arguments of \cite[Point (ii) of Theorem~6.19]{Di-Pietro.Droniou:20}.
\end{proof}

We start by estimating the error stemming from the diffusion term.

\begin{lemma}[Estimate of the discrete diffusion error]\label{eq:ah:consistency}
  Let $w\in W^{1,p}(\Omega)$ be such that
  $\sigma(\nabla w)\in W^{1,p'}(\Th)^d$ and $\nabla \cdot \sigma(\nabla w)\in L^{p'}(\Omega)$. Let's define the diffusion error linear form $\Erra:\Poly{k}(\Th)\to\Real$ such that, for all $v_h\in\Poly{k}(\Th)$,
  \begin{equation}\label{eq:Erra}
    \Erra(w;v_h)
    \coloneq -\int_\Omega\nabla\cdot\sigma(\nabla w)~v_h
    - a_h(\lproj{h}{k}w,v_h).
  \end{equation}
  Additionally assume that, for some $r\in\{0,\ldots,k\}$,
  \begin{itemize}
  \item $w_{|\omega_T} \in W^{r+1,p}(\TT)$ for all $T\in\Thd(w)$;
  \item $w_{|T} \in H^{r+1}(T)$ for all $T\in\Tha(w)$;
  \item $w_{|\omega_F} \in W^{r+1,p}(\TF)$ and $\sigma(\nabla w)_{|\omega_F} \in W^{r,p'}(\TF)^d$ for all $F\in\Fhd(w)$;
  \item $w_{|\omega_F} \in H^{r+1}(\TF)$ and $\sigma(\nabla w)_{|\omega_F} \in H^{r+\frac12}(\TF)^d$ for all $F\in\Fha(w)$.
  \end{itemize}
  Then, recalling \eqref{eq:q}, it holds, for any $w_h \in \Poly{k}(\Th)$ and any real number $\delta > 0$,
  \begin{equation}\label{eq:Erra:estimate}
    \begin{aligned}
      &\Erra(w;w_h-\lproj{h}{k}w)
      \\
      &\quad
      \le
      \delta\left(
      a_h(w_h, w_h - \lproj{h}{k} w)
      - a_h(\lproj{h}{k} w, w_h - \lproj{h}{k} w)
      + \seminorm{1,p,h}{w_h - \lproj{h}{k}w}^q
      + \norm{\beta,\mu,h}{w_h - \lproj{h}{k}w}^2
      \right)
      \\
      &\qquad
      + c(\delta)\left(
      \sum_{T\in\Tha(w)} \vref{T} h_T^{2r+1} \seminorm{H^{r+1}(T)}{w}^2
      +  \sum_{T\in\Thd(w)}\begin{cases}
        h_T^{rp}\seminorm{W^{r+1,p}(\TT)}{w}^{p}
        & \text{if $p< 2$}
        \\
        h_T^{2r}\seminorm{W^{r+1,p}(\TT)}{w}^2
        & \text{if $p\ge 2$}
      \end{cases}
      \right)
      \\
      &\qquad
      + c(\delta)
      \sum_{F\in\Fha(w)}\dref{F}^{-1}(w) h_F^{2r+1} \seminorm{H^{r+\frac12}(\elements{F})^d}{\sigma(\nabla w)}^2
      + c(\delta)\left(
      \sum_{F\in\Fhd(w)} h_F^{r p'}\seminorm{W^{r,p'}(\elements{F})}{\sigma(\nabla w)}^{p'}
      \right)^{\frac{q'}{p'}}
      \\
      &\qquad
      + c(\delta)\left(
      \sum_{F\in\Fha(w)} \vref{F} h_F^{2r+1} \seminorm{H^{r+1}(\TF)}{w}^2
      + \sum_{F\in\Fhd(w)} h_F^{rp} \seminorm{W^{r+1,p}(\elements{F})}{w}^p 
      \right),
    \end{aligned}
  \end{equation}
  with $c(\delta)$ independent of the particular mesh in $\{ \Th \}_h$ and the function $w$. 
\end{lemma}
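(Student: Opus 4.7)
The plan is to rewrite $\Erra(w;e_h)$, with $e_h\coloneq w_h-\lproj{h}{k}w$, via integration by parts and then to bound each resulting piece by an estimate that distinguishes advection- and diffusion-dominated elements/faces. Applying \eqref{eq:ibp} to $-\int_\Omega\nabla\cdot\sigma(\nabla w)\,e_h$ (the $\jump{\sigma(\nabla w)}$-face contribution vanishes because $\nabla\cdot\sigma(\nabla w)\in L^{p'}(\Omega)$ forces continuity of the normal trace), expanding $a_h(\lproj{h}{k}w,e_h)$ via \eqref{eq:ah}, using $\Gh e_h=\nabla_h e_h-\Rh e_h$, and inserting $\lproj{h}{k}$ in the $\Rh e_h$-term (this is where the $L^2$-orthogonality of $\lproj{h}{k}$ matters, since $\sigma(\Gh\lproj{h}{k}w)$ is not polynomial) so that \eqref{eq:rF} can be applied, yields the decomposition
\begin{equation*}
  \Erra(w;e_h) = \term_1 + \term_2 - s_h(\lproj{h}{k}w,e_h),
\end{equation*}
with $\term_1 \coloneq \int_\Omega (\sigma(\nabla w)-\sigma(\Gh\lproj{h}{k}w))\cdot\nabla_h e_h$ and $\term_2 \coloneq \sum_{F\in\Fh} \int_F \avg{\lproj{h}{k}\sigma(\Gh\lproj{h}{k}w)-\sigma(\nabla w)}\cdot\normal_F\,\jump{e_h}$.

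For the volume term $\term_1$ I would write $\nabla_h e_h=\Gh e_h+\Rh e_h$ and treat the two pieces separately. On the $\Gh e_h$-part, applying the modified monotonicity inequality \eqref{hirn-bound} pointwise with $(x,y,z)=(\Gh\lproj{h}{k}w,\Gh w_h,\nabla w)$ produces, after integration, a $\delta$-contribution equal to $\delta\int_\Omega(\sigma(\Gh w_h)-\sigma(\Gh\lproj{h}{k}w))\cdot\Gh e_h$; combined with the positivity of $s_h(w_h,e_h)-s_h(\lproj{h}{k}w,e_h)$ (monotonicity of $\sigma_1$), this is bounded by $\delta(a_h(w_h,e_h)-a_h(\lproj{h}{k}w,e_h))$, matching the first $\delta$-term in \eqref{eq:Erra:estimate}. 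The $c(\delta)$-contribution is the quasi-linear residual $\int(|\nabla w|+|\Gh\lproj{h}{k}w|)^{p-2}|\nabla w-\Gh\lproj{h}{k}w|^2$, which I would handle element by element via H\"older (with exponents depending on whether $p<2$ or $p\ge2$) combined with \eqref{eq:Gh:approximation} to reproduce the $\Thd(w)$ bounds. On each $T\in\Tha(w)$ I would bypass Hirn and estimate the integrand by Cauchy--Schwarz using $|\sigma(x)-\sigma(z)|\lesssim\dref{T}(w)\,|x-z|$; the condition $\Pe_T(w)>1$ then exchanges $\dref{T}(w)$ for $\vref{T}h_T$, producing the $\vref{T}h_T^{2r+1}\seminorm{H^{r+1}(T)}{w}^2$ term. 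The $\Rh e_h$-piece is handled analogously using \eqref{eq:Rh:boundedness} to replace $\norm{L^p(T)^d}{\Rh e_h}$ by $\seminorm{1,p,T}{e_h}$.

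For the face term $\term_2$ I would split the average as $\avg{\lproj{h}{k}(\sigma(\Gh\lproj{h}{k}w)-\sigma(\nabla w))}+\avg{\lproj{h}{k}\sigma(\nabla w)-\sigma(\nabla w)}$, controlling the former via $|\sigma(x)-\sigma(z)|\lesssim\dref{F}(w)|x-z|$ and the latter as a pure projection error. On $F\in\Fhd(w)$ I would apply H\"older with exponents $(p',p)$: the $L^{p'}$-side uses discrete trace and $L^{p'}$-stability of $\lproj{h}{k}$ to yield the $h_F^{rp'}\seminorm{W^{r,p'}(\elements{F})^d}{\sigma(\nabla w)}^{p'}$ and $\dref{F}(w)^{p'}h_F^{rp'}\seminorm{W^{r+1,p'}(\TF)}{w}^{p'}$ contributions, while the $L^p$-side is paired with $\seminorm{1,p,h}{e_h}$ and absorbed via Young with exponents $(q,q')$. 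On $F\in\Fha(w)$ I would instead pair with $\vref{F}^{1/2}\norm{L^2(F)}{\jump{e_h}}$ (absorbable into $\norm{\beta,\mu,h}{e_h}^2$), paying a $\dref{F}(w)^{-1/2}$ factor compensated by fractional trace bounds on $\sigma(\nabla w)$ and $w$, which deliver the $\dref{F}(w)^{-1}h_F^{2r+1}\seminorm{H^{r+\frac12}(\elements{F})^d}{\sigma(\nabla w)}^2$ and $\vref{F}h_F^{2r+1}\seminorm{H^{r+1}(\TF)}{w}^2$ terms. The stabilization contribution $-s_h(\lproj{h}{k}w,e_h)$ is controlled by a direct H\"older, using $\jump{\lproj{h}{k}w}=\jump{\lproj{h}{k}w-w}$ (since $w\in W_0^{1,p}(\Omega)$) and face approximation of $\lproj{h}{k}$, again with the split $\Fhd(w)\cup\Fha(w)$.

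The main obstacle I expect is the combinatorial bookkeeping: each of $\term_1$, $\term_2$, and $s_h(\lproj{h}{k}w,e_h)$ requires a case distinction $p<2$ vs $p\ge2$ \emph{and} an advection- vs diffusion-dominated split, and the resulting H\"older/Young exponents must consistently line up with the $(q,q')$ pair from \eqref{eq:q} so that the $\delta$-contributions are absorbable into the terms $a_h(w_h,\cdot)-a_h(\lproj{h}{k}w,\cdot)$, $\seminorm{1,p,h}{\cdot}^q$, and $\norm{\beta,\mu,h}{\cdot}^2$ appearing on the right-hand side of \eqref{eq:Erra:estimate}. A subtlety specific to the nonconforming DG setting (and absent from the conforming analysis of \cite{Hirn:13}) is that both the $\Rh e_h$-part of $\term_1$ and the jump-of-$e_h$ factor in $\term_2$ interact with the numerical diffusion introduced by $s_h$; this is precisely what motivates the $h_F^{2-p}$-weighted second argument of the $\max$ in \eqref{eq:PeF.drefF}, which has to be carefully tracked throughout the face estimates in order to obtain the cleaner $\dref{F}(w)$-dependence stated in the lemma.
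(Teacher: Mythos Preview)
Your overall strategy---integration by parts, Hirn's modified monotonicity \eqref{hirn-bound}, and the element/face P\'eclet split---matches the paper's. But two of your choices create extra work that the paper sidesteps, and one of them would not yield the lemma exactly as stated.

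\textbf{The decomposition.} Your face term $\term_2$ carries $\avg{\lproj{h}{k}\sigma(\Gh\lproj{h}{k}w)-\sigma(\nabla w)}$, forcing you to split off a flux-difference piece $\lproj{h}{k}(\sigma(\Gh\lproj{h}{k}w)-\sigma(\nabla w))$ on each face; you yourself note that this produces a $\dref{F}(w)^{p'}h_F^{rp'}\seminorm{W^{r+1,p'}(\TF)}{w}^{p'}$ contribution on $\Fhd(w)$, but that term is \emph{not} in the right-hand side of \eqref{eq:Erra:estimate} (it appears only later, in the advection lemma). The paper instead inserts $\lproj{h}{k}\sigma(\nabla w)$---not $\lproj{h}{k}\sigma(\Gh\lproj{h}{k}w)$---via the identity $0=\int_\Omega\lproj{h}{k}\sigma(\nabla w)\cdot\Rh e_h-\sum_F\int_F\avg{\lproj{h}{k}\sigma(\nabla w)}\cdot\normal_F\jump{e_h}$ together with adding/subtracting $\int_\Omega\sigma(\nabla w)\cdot\Rh e_h$. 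The volume $\Rh e_h$ contribution then becomes $\int_\Omega(\sigma(\nabla w)-\lproj{h}{k}\sigma(\nabla w))\cdot\Rh e_h$, which vanishes identically since $\Rh e_h\in\Poly{k}(\Th)^d$. The upshot is that the volume term has $\Gh e_h$ (not $\nabla_h e_h$) on the test side and the face term is the pure projection error $\avg{\sigma(\nabla w)-\lproj{h}{k}\sigma(\nabla w)}$, eliminating both your $\Rh e_h$ piece and the flux-difference face piece in one move.

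\textbf{Hirn is applied globally, not bypassed.} Your phrase ``bypass Hirn on $T\in\Tha(w)$ and estimate by Cauchy--Schwarz with $|\sigma(x)-\sigma(z)|\lesssim\dref{T}(w)|x-z|$'' is problematic if taken literally: it leaves a factor $\|\Gh e_h\|_{L^2(T)}$ that none of the available $\delta$-terms can absorb. The paper applies \eqref{hirn-bound} on \emph{every} element, obtaining the correct $\delta$-contribution to the $a_h$-difference plus an $e_h$-free residual $\int_T(|\nabla w|+|\Gh\lproj{h}{k}w|)^{p-2}|\nabla w-\Gh\lproj{h}{k}w|^2$; only this residual is then bounded differently on $\Tha(w)$ versus $\Thd(w)$. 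The same pattern is used for the stabilization: the paper applies \eqref{hirn-bound} with $n=1$ and $(x,y,z)=(\jump{\lproj{h}{k}w},\jump{w_h},\jump{w})$, getting the $\delta\bigl(s_h(w_h,e_h)-s_h(\lproj{h}{k}w,e_h)\bigr)$ contribution that completes the $a_h$-difference, plus a residual $h_F^{1-p}\int_F|\jump{\lproj{h}{k}w}|^{p-2}|\jump{\lproj{h}{k}w-w}|^2$. On $F\in\Fha(w)$ this residual is bounded via $h_F^{1-p}\norm{L^\infty(F)}{|\jump{\lproj{h}{k}w}|^{p-2}}\le\dref{F}(w)/h_F$---precisely where the second argument of the $\max$ in \eqref{eq:PeF.drefF} enters. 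Your ``direct H\"older'' for $s_h$ would not naturally produce this structure on advective faces.
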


\begin{proof}
  Let, for the sake of brevity, 
  \[
  v_h \coloneq w_h - \lproj{h}{k}w.
  \]
  Using the integration by parts formula \eqref{eq:ibp} for the first term in the right-hand side of \eqref{eq:Erra} along with the fact that $\jump{\sigma(\nabla w)}\cdot\normal_F$ vanishes for all $F\in\Fhi$ (which expresses the continuity of normal fluxes),
  expanding $a_h$ according to its definition \eqref{eq:ah},
  adding
  $0 = \int_\Omega\lproj{h}{k}\sigma(\nabla w)\cdot\Rh v_h - \sum_{F\in\Fh}\int_F\avg{\lproj{h}{k}\sigma(\nabla w)}\cdot\normal_F\,\jump{v_h}$ (cf. \eqref{eq:Rh} and \eqref{eq:rF}),
  and adding and subtracting $\int_\Omega\sigma(\nabla w)\cdot\Rh v_h$,
  we arrive at the following decomposition of the error:
  \begin{equation}\label{eq:Erra:basic}
    \begin{aligned}
      &\Erra(w;v_h)
      \\
      &\quad= \underbrace{
        \vphantom{\sum_{F\in\Fh}}
        \int_\Omega        
          \left[
            \sigma(\Gh\lproj{h}{k}w) - \sigma(\nabla w)
            \right]\cdot\Gh(\lproj{h}{k}w - w_h)        
      }_{\term_1}
      -\underbrace{
        \sum_{F\in\Fh}\int_F\avg{\sigma(\nabla w) - \lproj{h}{k}\sigma(\nabla w)}\cdot\normal_F~\jump{v_h}
      }_{\term_2}
      \\
      &\qquad
      + \cancel{\int_\Omega\left[
          \sigma(\nabla w) - \lproj{h}{k}\sigma(\nabla w)
          \right]\cdot\Rh v_h}
      + \underbrace{%
        s_h(\lproj{h}{k}w, \lproj{h}{k}w - w_h),
      }_{\term_3}
    \end{aligned}
  \end{equation}
  where the cancellation follows from the definition of $\lproj{h}{k}$ after recalling that $\Rh v_h\in\Poly{k}(\Th)^d$.
  We next proceed to estimate the other terms in the right-hand side.
  \medskip\\
  \underline{\textbf{Estimate of $\term_1$}.}
  For the first term, we start by writing $\term_1 = \sum_{T\in\Th}\term_1(T)$ and consider a single $T \in \Th$.
  Using the bound \eqref{hirn-bound} with $n = d$ and $(x,y,z) = (\Gh\lproj{h}{k} w, \Gh w_h, \nabla w)$ and recalling that $v_h = w_h - \lproj{h}{k}w$, we obtain
  \begin{equation}\label{eq:T1:initial}
    \begin{aligned}
      \term_1(T)
      &\le
      \delta\left( \int_T\sigma(\Gh w_h) \cdot \Gh v_h - \int_T\sigma(\Gh\lproj{h}{k}w) \cdot \Gh v_h \right)
      + c(\delta) \term_{1,\err}(T),
    \end{aligned}
  \end{equation}
  where
  \[
  \term_{1,\err}(T)
  \coloneq
  \int_{T} (|\nabla w|+|\Gh\lproj{h}{k}w|)^{p-2} |\nabla w - \Gh\lproj{h}{k}w|^2.
  \]  
  We now distinguish between diffusion-dominated and advection-dominated elements of the mesh to estimate $\term_{1,\err}(T)$.
  
  Let first $T \in \Thd(w)$.
  In the case $p < 2$, we use the fact that $|\nabla w - \Gh\lproj{h}{k}w| \le |\nabla w| + |\Gh\lproj{h}{k}w|$ almost everywhere in $T$ along with the fact that $\Real^+\ni x\mapsto x^{p-2}\in\Real$ is strictly decreasing to write
  \begin{equation}\label{eq:Erra:T1err:diffusive.p<2}
    \term_{1,\err}(T)
    \le  \int_{T} |\nabla w - \Gh\lproj{h}{k}w|^p
    = \norm{L^p(T)^d}{\nabla w - \Gh\lproj{h}{k}w}^p
    \overset{\eqref{eq:Gh:approximation}}\lesssim  h_T^{rp} \seminorm{W^{r+1,p}(\TT)}{w}^p.
  \end{equation}
  In the case  $p \ge 2$, on the other hand, we apply a H\"older inequality with exponents $\left(\frac{p}{p-2},\frac{p}{2}\right)$
  and a triangle inequality to write 
  \begin{equation}\label{eq:Erra:T1err:diffusive.p>=2}
    \begin{aligned}
      \term_{1,\err}(T)
      &\lesssim
      \left(
      \norm{L^p(T)^d}{\nabla w}
      + \norm{L^p(T)^d}{\Gh \lproj{h}{k} w}
      \right)^{p-2}\norm{L^p(T)^d}{\nabla w - \Gh \lproj{h}{k} w}^2
      \\
      \overset{\eqref{eq:Gh.lproj:boundedness},\,\eqref{eq:Gh:approximation}}&\lesssim
      \norm{L^p(\TT)^d}{\nabla w}^{p-2}
      h_T^{2r}\seminorm{W^{r+1,p}(\TT)}{w}^2
      \lesssim  h_T^{2r}\seminorm{W^{r+1,p}(\TT)}{w}^2,
    \end{aligned}
  \end{equation}
  where the conclusion follows from the assumption $\norm{L^p(\Omega)^d}{\nabla w}\lesssim 1$.

  Let now $T\in\Tha(w)$. We first consider the case $p < 2$.
  Using again the fact that $\Real^+\ni x\mapsto x^{p-2}\in\Real$ is stricly decreasing,
  then applying a H\"older inequality with exponents $(\infty,1)$ and using the approximation properties \eqref{eq:Gh:approximation} of $\Gh\circ\lproj{h}{k}$,
  and finally recalling that $\Pe_T(w) > 1$ (cf. \eqref{eq:PeT} for its definition), we have
  \begin{equation}\label{eq:Erra:T1err:advective.p<2}
    \begin{aligned}
      \term_{1,\err}(T)
      &\lesssim  \int_{T} |\nabla w|^{p-2} |\nabla w - \Gh\lproj{h}{k}w|^2
      \lesssim  \norm{L^\infty(T)}{|\nabla w|^{p-2}}~\norm{L^2(T)^d}{\nabla w - \Gh\lproj{h}{k}w}^2
      \\
      &\lesssim  \dref{T}(w) h_T^{2r} \seminorm{H^{r+1}(T)}{w}^2
      \le  \vref{T} h_T^{2r+1} \seminorm{H^{r+1}(T)}{w}^2.
    \end{aligned}
  \end{equation}
  In the case $p\ge 2$, on the other hand, the local boundedness \eqref{eq:Gh.lproj:boundedness} of $\Gh\circ\lproj{h}{k}$ with $q=\infty$ along with the definition \eqref{eq:PeT} of $\dref{T}(w)$ easily leads to
  \begin{equation}\label{eq:Erra:T1err:advective.p>=2}
    \term_{1,\err}(T)
    \lesssim \dref{T}(w) h_T^{2r} \seminorm{H^{r+1}(T)}{w}^2
    \lesssim \vref{T} h_T^{2r+1}\seminorm{H^{r+1}(T)}{w}^2,
  \end{equation}
  where the conclusion follows again using $\Pe_T(w) > 1$.

  Plugging the estimates
  \eqref{eq:Erra:T1err:diffusive.p<2},
  \eqref{eq:Erra:T1err:diffusive.p>=2},
  \eqref{eq:Erra:T1err:advective.p<2},
  and \eqref{eq:Erra:T1err:advective.p>=2} into \eqref{eq:T1:initial}, we arrive at
  \begin{equation}\label{eq:Erra:estimate:T1}
    \begin{aligned}
      \term_1
      &\le
      \delta\left(
      \int_\Omega\sigma(\Gh w_h)\cdot\Gh v_h
      - \int_\Omega\sigma(\Gh\lproj{h}{k}w)\cdot\Gh v_h
      \right)
      \\
      &\quad
      + c(\delta) \sum_{T\in\Tha(w)} \vref{T} h_T^{2r+1} \seminorm{H^{r+1}(T)}{w}^2
      + c(\delta) \sum_{T\in\Thd(w)}\begin{cases}
        h_T^{rp}\seminorm{W^{r+1,p}(\TT)}{w}^{p}
        & \text{if $p< 2$},
        \\
        h_T^{2r}\seminorm{W^{r+1,p}(\TT)}{w}^2
        & \text{if $p\ge 2$}.      
      \end{cases}
    \end{aligned}
  \end{equation}
  \medskip\\
  \underline{\textbf{Estimate of $\term_2$}.}
  For the second term, we write $\term_2 = \sum_{F\in\Fh}\term_2(F)$ and, for all $F\in\Fhd(w)$, we estimate $\term_2(F)$ as follows:
  \begin{equation}\label{eq:usedinremark}
  \begin{aligned}
    \term_2(F)
    &\le\norm{L^{p'}(F)^d}{\avg{\sigma(\nabla w) - \lproj{h}{k}\sigma(\nabla w)}}
    ~\norm{L^p(F)}{\jump{v_h}}
    \\
    &\lesssim
    h_F^{r-\frac{1}{p'}+\frac{p-1}{p}}\seminorm{W^{r,p'}(\elements{F})}{\sigma(\nabla w)}
    ~h_F^{\frac{1-p}{p}}\norm{L^p(F)}{\jump{v_h}}
    \\
    \overset{\eqref{eq:p'}}&=
    h_F^r\seminorm{W^{r,p'}(\elements{F})}{\sigma(\nabla w)}
    ~h_F^{\frac{1-p}{p}}\norm{L^p(F)}{\jump{v_h}},
  \end{aligned}
  \end{equation}
  where we have used a triangle inequality along with the approximation properties of the $L^2$-orthogonal projector to treat the first factor in the passage to the second line.

  For $F\in\Fha(w)$, on the other hand, we first notice that $\vref{F} \neq 0$ and then
  use a Cauchy--Schwarz inequality to write
  \begin{equation}\label{eq:conv:Kminus}
  \begin{aligned}
    \term_2(F)
    &\le \vref{F}^{-\frac12} \norm{L^2(F)^d}{\avg{\sigma(\nabla w) - \lproj{h}{k}\sigma(\nabla w)}}
    ~\vref{F}^{\frac12} \norm{L^2(F)}{\jump{v_h}}
    \\
    &\lesssim \vref{F}^{-\frac12} h_F^{r} \seminorm{H^{r+\frac12}(\elements{F})^d}{\sigma(\nabla w)}
    ~\vref{F}^{\frac12} \norm{L^2(F)}{\jump{v_h}} \\
    & \lesssim \dref{F}^{-\frac12}(w) h_F^{r+\frac12} \seminorm{H^{r+\frac12}(\elements{F})^d}{\sigma(\nabla w)}
    ~\vref{F}^{\frac12} \norm{L^2(F)}{\jump{v_h}},
  \end{aligned}
  \end{equation}
  where we have used the fact that $\Pe_F(w) > 1$ to conclude.
  
  Gathering the above bounds and applying a H\"older inequality with exponents $(p',p)$ on the sum over $F\in\Fhd(w)$, a Cauchy--Schwarz inequality on the sum over $F\in\Fha(w)$,
  and using a generalized Young inequality with exponents $(q',q)$, we get
  \begin{equation}\label{eq:Erra:estimate:T2}
    \begin{aligned}
      \term_2
      &\le
      \delta \left(
      \seminorm{1,p,h}{v_h}^q
      + \norm{\beta,\mu,h}{v_h}^2
      \right)
      + c(\delta)
      \sum_{F\in\Fha(w)}\dref{F}^{-1}(w) h_F^{2r+1} \seminorm{H^{r+\frac12}(\elements{F})^d}{\sigma(\nabla w)}^2
      \\
      &\quad
      + c(\delta)\left(
      \sum_{F\in\Fhd(w)} h_F^{r p'}\seminorm{W^{r,p'}(\elements{F})}{\sigma(\nabla w)}^{p'}
      \right)^{\frac{q'}{p'}}.
    \end{aligned}
  \end{equation}
  \medskip\\
  \underline{\textbf{Estimate of $\term_3$}.}
  Finally, for the third term, we write again $\term_3 = \sum_{F\in\Fh}\term_3(F)$.
  We then first recall that $\jump{w}=0$ and then apply \eqref{hirn-bound} 
  with $n = 1$ and $(x,y,z) = (\jump{\lproj{k}{k}w}, \jump{w_h}, \jump{w})$, additionally using the fact that $v_h = w_h - \lproj{h}{k}w$; we obtain that, for all positive $\delta$,
  \[
  \begin{aligned}
    \term_3(F)
    & = h_F^{1-p}\int_F (\sigma_1(\jump{\lproj{h}{k}w}) - \sigma_1(\jump{w})) \jump{\lproj{h}{k}w - w_h}
    \\
    &\le
    \delta\left(
    h_F^{1-p}\int_F \sigma_1(\jump{w_h}) \jump{v_h}
    - h_F^{1-p}\int_F \sigma_1(\jump{\lproj{h}{k}w}) \jump{v_h}
    \right)
    + c(\delta) \term_{3,\err}(F),
  \end{aligned}
  \]
  with
  \[
  \term_{3,\err}(F) \coloneq
  h_F^{1-p} \int_F (|\jump{\lproj{h}{k}w}|+|\jump{w}|)^{p-2} |\jump{w - \lproj{h}{k}w}|^2.
  \]
  For $F\in\Fhd(w)$, this term is bounded trivially recalling that $\jump{w} = 0$:
  \[
    \term_{3,\err}(F)
    = h_F^{1-p} \int_F |\jump{\lproj{h}{k}w-w}|^p
    = h_F^{1-p} \norm{L^p(F)}{\jump{\lproj{h}{k}w-w}}^p
    \lesssim h_F^{rp} \seminorm{W^{r+1,p}(\elements{F})}{w}^p.
  \]
For $F\in\Fha(w)$, on the other hand, recalling again that $\jump{w} = 0$ and using a H\"older inequality with exponents $(\infty,1)$, we have
  \[
  \begin{aligned}
    \term_{3,\err}(F)
    &\le h_F^{1-p} \norm{L^\infty(F)}{|\jump{\lproj{h}{k}w}|^{p-2}}
      \norm{L^2(F)}{\jump{\lproj{h}{k}w-w}}^2
    \\
    \overset{\eqref{eq:PeF.drefF}}&\le \frac{\dref{F}(w)}{h_F} \norm{L^2(F)}{\jump{\lproj{h}{k}w-w}}^2
    \lesssim \vref{F} h_F^{2r+1} \seminorm{H^{r+1}(\TF)}{w}^2,
  \end{aligned}
  \]
  where the conclusion follows using the fact that $\Pe_F^{-1}(w) < 1$ for the first factor and a triangle inequality followed by the approximation properties of $\lproj{h}{k}$ for the second.

  Gathering the above estimates, we arrive at the following bound for $\term_3$:
  \begin{equation}\label{eq:Erra:estimate:T3}
  \begin{aligned}
    \term_3
    & \le
    \delta\left(
    s_h(w_h, v_h) - s_h(\lproj{h}{k}w, v_h)
    \right)
    \\
    &\quad
    + c(\delta) \sum_{F\in\Fha(w)} \vref{F} h_F^{2r+1} \seminorm{H^{r+1}(\TF)}{w}^2
    + c(\delta) \sum_{F\in\Fhd(w)} h_F^{rp} \seminorm{W^{r+1,p}(\elements{F})}{w}^p.
  \end{aligned}  
  \end{equation}
  \\
  \underline{\textbf{Conclusion.}}
  Plugging \eqref{eq:Erra:estimate:T1}, \eqref{eq:Erra:estimate:T2}, and \eqref{eq:Erra:estimate:T3} into \eqref{eq:Erra:basic} and recalling that, in each of these estimates, $\delta > 0$ is arbitrary, the conclusion follows.
\end{proof}

\begin{remark}[Negative power of $\dref{F}$]\label{newrem-1}
  As already mentioned, the negative power of $\dref{F}$ appearing in bound \eqref{eq:Erra:estimate} is balanced by the associated $\sigma$ regularity term. The $\dref{F}^{-1}$ stems from equation \eqref{eq:conv:Kminus}. The fact that the associated term $\term_2(F)$, $F\in\Fha(w)$, cannot lead to an arbitrarily large contribution to the error becomes clear by bounding such term as in \eqref{eq:usedinremark} instead of \eqref{eq:conv:Kminus} (that is, using the diffusive part of the norm instead of the advective one). Here, we decided to use \eqref{eq:conv:Kminus} in order to clearly underline the faster pre-asymptotic reduction rate occurring in advection dominated cases.
\end{remark}

\subsection{Properties of the advection-reaction bilinear form}

We now estimate the error stemming from the advection component of the equation.

\begin{lemma}[Estimate of the discrete advection-reaction error]\label{eq:bh:consistency}
  Let $w\in W^{1,p}(\Omega)$ and define the advection-reaction error linear form $\Errb(w;v_h):\Poly{k}(\Th)\to\Real$ such that, for all $v_h \in \Poly{k}(\Th)$,
  \begin{equation}\label{eq:Errb}
    \Errb(w;v_h)
    \coloneq \int_\Omega\nabla\cdot(\beta w) v_h 
    + \int_\Omega\mu w v_h
    - b_h(\lproj{h}{k}w, v_h).
  \end{equation}
  Additionally assume that $w\in H^{r+1}(\Th)$ and $w_{|\omega_F}\in W^{r+1,p'}(\TF)$ for all $F\in\Fhd(w)$ for some $r\in\{0,\ldots,k\}$.
  Then, with $q$ as in \eqref{eq:q}, it holds, for any $w_h \in \Poly{k}(\Th)$ and any real number $\delta > 0$,
  \begin{equation}\label{eq:Errb:estimate}
    \begin{aligned}
      \Errb(w;w_h - \lproj{h}{k}w)
      &\le
      \delta\left(
      \seminorm{1,p,h}{w_h - \lproj{h}{k} w}^q
      + \norm{\beta,\mu,h}{w_h - \lproj{h}{k} w}^2
      \right)
      \\
      &\quad
      + c(\delta)\left(
      \sum_{T\in\Th}\tref^{-2}\underline{\mu}_T^{-1}h_T^{2(r+1)}\seminorm{H^{r+1}(T)}{w}^2
      + \sum_{F\in\Fha(w)} \vref{F} h_F^{2r+1}\seminorm{H^{r+1}(\TF)}{w}^2
      \right)
      \\
      &\quad
      + c(\delta)\left(
      \sum_{F\in\Fhd(w)} \dref{F}(w)^{p'} h_F^{rp'}\seminorm{W^{r+1,p'}(\TF)}{w}^{p'}
      \right)^{\frac{q'}{p'}},
    \end{aligned}
  \end{equation}
  with $c(\delta)$ independent of the particular mesh in $\{ \Th \}_h$ and the function $w$.
\end{lemma}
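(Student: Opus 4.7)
The plan is to start by applying the integration by parts formula \eqref{eq:ibp} to $\int_\Omega \nabla\cdot(\beta w)\,v_h$ with $\tau = \beta w$. Since $u \in W^{1,p}(\Omega)$ implies $\jump{w} = 0$ on every interior face (so $\jump{\beta w}\cdot\normal_F = 0$) and $w = 0$ on $\partial\Omega$, this yields $\int_\Omega\nabla\cdot(\beta w)\,v_h = -\int_\Omega (\beta w)\cdot\nabla_h v_h + \sum_{F\in\Fh}\int_F(\beta\cdot\normal_F)\avg{w}\jump{v_h}$. Subtracting $b_h(\lproj{h}{k}w,v_h)$ as defined in \eqref{eq:bh} and writing $\eta \coloneq w - \lproj{h}{k}w$, I obtain the decomposition
\[
\Errb(w;v_h) = -\int_\Omega \eta\,(\beta\cdot\nabla_h v_h) + \int_\Omega\mu\,\eta\,v_h + \sum_{F\in\Fh}\int_F(\beta\cdot\normal_F)\avg{\eta}\jump{v_h} + \frac12\sum_{F\in\Fh}\vref{F}\int_F \jump{\eta}\jump{v_h},
\]
using $\jump{w} = 0$ to flip the sign on the penalty piece. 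I will bound the four terms separately and then apply Young's inequality with exponents $(q',q)$.

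For the two volume terms, I will exploit the $L^2$-orthogonality of $\eta|_T$ to $\Poly{k}(T)$. Since $\lproj{T}{0}\beta \cdot \nabla v_h \in \Poly{k-1}(T) \subset \Poly{k}(T)$, I may replace $\beta$ by $\beta - \lproj{T}{0}\beta$ in the advective volume term, gaining a factor $\|\beta - \lproj{T}{0}\beta\|_{L^\infty(T)^d} \lesssim h_T\tref^{-1}$. Combined with an inverse inequality $\|\nabla v_h\|_{L^2(T)} \lesssim h_T^{-1}\|v_h\|_{L^2(T)}$, the $L^2$ approximation $\|\eta\|_{L^2(T)} \lesssim h_T^{r+1}|w|_{H^{r+1}(T)}$, and the bound $\|v_h\|_{L^2(T)} \le \underline{\mu}_T^{-1/2}\|\mu^{1/2}v_h\|_{L^2(T)}$, Young's inequality produces $\delta\|\mu^{1/2}v_h\|_{L^2(T)}^2 + c(\delta)\tref^{-2}\underline{\mu}_T^{-1}h_T^{2(r+1)}|w|_{H^{r+1}(T)}^2$. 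The same trick with $\lproj{T}{0}\mu$ (and one less power of $h_T$ to recover) handles the reaction term analogously.

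The face terms are the main source of bookkeeping, and I will partition $\Fh = \Fha(w)\cup\Fhd(w)$. For $F\in\Fha(w)$, I apply Cauchy--Schwarz and split as $\bigl(\vref{F}^{1/2}\|\avg{\eta}\|_{L^2(F)}\bigr)\cdot\bigl(\vref{F}^{1/2}\|\jump{v_h}\|_{L^2(F)}\bigr)$, controlling the first factor by the scaled $L^2$ trace-approximation $\|\avg{\eta}\|_{L^2(F)} \lesssim h_F^{r+1/2}|w|_{H^{r+1}(\TF)}$ and absorbing the second into $\norm{\beta,\mu,h}{v_h}^2$ through Young; the same scheme handles the penalty contribution on these faces, producing the $\vref{F}h_F^{2r+1}|w|_{H^{r+1}(\TF)}^2$ term. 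For $F\in\Fhd(w)$, I use the $L^{p'}$--$L^p$ H\"older pairing with the scaled trace-approximation $\|\avg{\eta}\|_{L^{p'}(F)} \lesssim h_F^{r+1/p}|w|_{W^{r+1,p'}(\TF)}$, which combines with $\vref{F}h_F^{(p-1)/p}$ to yield $\vref{F}h_F^{r+1}|w|_{W^{r+1,p'}(\TF)}$; the key algebraic step is then to use $\Pe_F(w)\le 1$, i.e., $\vref{F}h_F \le \dref{F}(w)$, to rewrite this as $\dref{F}(w)h_F^r |w|_{W^{r+1,p'}(\TF)}$. Pairing with $h_F^{(1-p)/p}\|\jump{v_h}\|_{L^p(F)}$ and applying a discrete H\"older inequality over $F\in\Fhd(w)$ with exponents $(p',p)$ followed by Young with exponents $(q',q)$ produces exactly the $(\cdots)^{q'/p'}$ term. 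The principal subtlety is this precise matching of the $h_F$, $\vref{F}$, and $\dref{F}$ powers to the Péclet definition \eqref{eq:PeF.drefF}, and the final Young exponents $(q',q)$ that correctly encode the $p<2$ versus $p\ge 2$ dichotomy built into \eqref{eq:q}.
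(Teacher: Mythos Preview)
Your proposal is correct and follows essentially the same route as the paper: the same integration-by-parts decomposition into four terms, the same $\lproj{T}{0}\beta$ orthogonality trick combined with an inverse inequality for the advective volume term, and the same face-by-face splitting into $\Fha(w)$ and $\Fhd(w)$ with the $\Pe_F(w)\le 1$ bound producing the $\dref{F}(w)$ factor. One small point: for the reaction volume term the paper does \emph{not} use the $\lproj{T}{0}\mu$ trick (and indeed $\mu$ is only assumed $L^\infty$, so $\|\mu-\lproj{T}{0}\mu\|_{L^\infty(T)}$ need not be $\mathcal{O}(h_T)$); it simply bounds $\int_T \mu\,\eta\,v_h \le \|\mu\|_{L^\infty(T)}^{1/2}\|\eta\|_{L^2(T)}\|\mu^{1/2}v_h\|_{L^2(T)}$ directly and then uses $\|\mu\|_{L^\infty(T)} \le \tref^{-1} \le \tref^{-2}\underline{\mu}_T^{-1}$, which already yields the stated bound without any orthogonality.
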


\begin{proof}    
  We set again, for the sake of brevity,
  \[
  v_h \coloneq w_h - \lproj{h}{k}w.
  \]
Using \eqref{eq:ibp} with $(\tau, v) = (\beta w, v_h)$ to integrate by parts the first term in the right-hand side of \eqref{eq:Errb} along with $\nabla \cdot \beta = 0$, recalling the single-valuedness of $\beta \cdot \normal_F$ and $(\beta \cdot \normal_F) w$ across any interface $F \in \Fhi$, and inserting $w$ into the jump operator after noticing that this quantity is single-valued across interfaces and it vanishes on boundary faces, we arrive at the following decomposition of the error:
  \begin{equation}\label{eq:Errb:basic}
    \begin{aligned}
      \Errb(w;v_h)
      &= -\int_\Omega(w - \lproj{h}{k}w)(\beta\cdot\nabla_h v_h)
      + \int_\Omega\mu(w - \lproj{h}{k}w) v_h
      \\
      &\quad
      + \sum_{F\in\Fh}\int_F(\beta\cdot\normal_F) \avg{w - \lproj{h}{k}w}\jump{v_h}
      + \frac12\sum_{F\in\Fh}\vref{F}\int_F \jump{w - \lproj{h}{k}w} \jump{v_h}.
      \\
      &\eqcolon
      \term_1 + \cdots + \term_4.
    \end{aligned}
  \end{equation}
  We proceed to estimate the terms in the right-hand side.
  \medskip\\
  \underline{\textbf{Estimate of $\term_1$}.}
  For the first term, we use the definition of $\lproj{T}{k}$ along with the fact that $\lproj{T}{0}\beta \cdot \nabla v_T \in \Poly{k-1}(T) \subset \Poly{k}(T)$ to write
  \begin{equation}\label{eq:Errb:T1}
    \begin{aligned}
      \term_1
      &= \sum_{T\in\Th}\int_T(w-\lproj{T}{k}w)[(\beta-\lproj{T}{0}\beta)\cdot\nabla v_T]
      \\
      &\le\sum_{T\in\Th}
      \norm{L^2(T)}{w-\lproj{T}{k}w}\norm{L^\infty(T)^d}{\beta-\lproj{T}{0}\beta}\norm{L^2(T)^d}{\nabla v_T}
      \\
      &\lesssim\sum_{T\in\Th}
      h_T^{r+1}\seminorm{H^{r+1}(T)}{w}~h_T\seminorm{W^{1,\infty}(T)^d}{\beta}~h_T^{-1}\norm{L^2(T)}{v_T}
      \\
      &\le\left(
      \sum_{T\in\Th}\tref^{-2}\underline{\mu}_T^{-1}h_T^{2(r+1)}\seminorm{H^{r+1}(T)}{w}^2
      \right)^{\frac12}\norm{\beta,\mu,h}{v_h}
      \\
      &\le\delta\norm{\beta,\mu,h}{v_h}^2
      + c(\delta)\sum_{T\in\Th}\tref^{-2}\underline{\mu}_T^{-1}h_T^{2(r+1)}\seminorm{H^{r+1}(T)}{w}^2,
    \end{aligned}
  \end{equation}
  where we have used a H\"older inequality with exponents $(2,\infty,2)$ to pass to the second line,
  the approximation properties of the $L^2$-orthogonal projector along with a discrete inverse inequality to pass to the third line,
  a discrete Cauchy--Schwarz inequality on the sum over $T\in\Th$ along with the definition \eqref{eq:tref} of the reference time to pass to the fourth line,
  and a generalized Young inequality to conclude.
  \medskip\\
  \underline{\textbf{Estimate of $\term_2$}.}
  For the second term, a H\"{o}lder inequality with exponents $(\infty,2,2)$
  and the approximation properties of the $L^2$-orthogonal projector readily give
  \begin{equation}\label{eq:Errb:T2}
    \begin{aligned}
      \term_2
      &\lesssim
      \sum_{T\in\Th} \norm{L^\infty(T)}{\mu}^{\frac12} h_T^{r+1}\seminorm{H^{r+1}(T)}{w}~\norm{L^2(T)}{\mu^{\frac12}v_T}
      \\
      \overset{\eqref{eq:tref}}&\le
      \sum_{T\in\Th}\tref^{-\frac12}h_T^{r+1}\seminorm{H^{r+1}(T)}{w}~\norm{L^2(T)}{\mu^{\frac12}v_T}
      \\
      &\le\left(
      \sum_{T\in\Th}\tref^{-2}\underline{\mu}_T^{-1}h_T^{2(r+1)}\seminorm{H^{r+1}(T)}{w}^2
      \right)^{\frac12}\norm{\beta,\mu,h}{v_h}
      \\
      &\le\delta\norm{\beta,\mu,h}{v_h}^2
      + c(\delta)\sum_{T\in\Th}\tref^{-2}\underline{\mu}_T^{-1}h_T^{2(r+1)}\seminorm{H^{r+1}(T)}{w}^2,
    \end{aligned}
  \end{equation}
  where we have used a discrete Cauchy--Schwarz inequality on the sum over $T\in\Th$, noticed that $\tref^{-1}\le\tref^{-2}\underline{\mu}_T^{-1}$, recalled the definition \eqref{eq:norm.beta.mu.h} of the advection-reaction norm in the third inequality, and used a generalized Young inequality to conclude.
  \medskip\\
  \underline{\textbf{Estimate of $\term_3 + \term_4$}.}
  We next write $\term_3 + \term_4 = \sum_{F\in\Fh}\term_{3+4}(F)$ and estimate separately the local contribution on diffusion- and advection-dominated faces.
  
  For all $F\in\Fhd(w)$, we write
  \[
  \begin{aligned}
    |\term_{3+4}(F)|
    &\lesssim\vref{F}\left(
    \norm{L^{p'}(F)}{\avg{w - \lproj{h}{k}w}}
    + \norm{L^{p'}(F)}{\jump{w - \lproj{h}{k}w}}
    \right)~\norm{L^p(F)}{\jump{v_h}}
    \\
    &\lesssim\vref{F} h_F^{r+1-\frac{1}{p'}}\seminorm{W^{r+1,p'}(\TF)}{w}~h_F^{\frac{1}{p'}}
    h_F^{\frac{1-p}{p}}\norm{L^p(F)}{\jump{v_h}}
    \\
    &= \dref{F}(w) \Pe_F(w) h_F^r \seminorm{W^{r+1,p'}(\TF)}{w}~
    h_F^{\frac{1-p}{p}}\norm{L^p(F)}{\jump{v_h}}
    \\
    &\le \dref{F}(w) h_F^r\seminorm{W^{r+1,p'}(\TF)}{w}~
    h_F^{\frac{1-p}{p}}\norm{L^p(F)}{\jump{v_h}},
  \end{aligned}
  \]
where we have used a H\"older inequality with exponents $(\infty,p',p)$ in the first inequality, triangle inequalities followed by the trace approximation properties of the $L^2$-orthogonal projector along with \eqref{eq:p'} to write $1 = h_F^{\frac{1}{p'}}h_F^{\frac{1-p}{p}}$ in the second inequality,
  the definition \eqref{eq:PeF.drefF} of the local Péclet number in the equality,
  and the fact that $\Pe_F(w)\le 1$ to conclude.
  
  For all $F\in\Fha(w)$, on the other hand, the estimate is
  \begin{equation}\label{eq:usedinrem2}
  \begin{aligned}
    |\term_{3+4}(F)|
    &\lesssim\vref{F}^{\frac12}\left(
    \norm{L^2(F)}{\avg{w-\lproj{h}{k}w}}
    + \norm{L^2(F)}{\jump{w-\lproj{h}{k}w}}
    \right)~\vref{F}^{\frac12}\norm{L^2(F)}{\jump{v_h}}
    \\
    &\lesssim\vref{F}^{\frac12}h_F^{r+\frac12}\seminorm{H^{r+1}(\TF)}{w}~\vref{F}^{\frac12}\norm{L^2(F)}{\jump{v_h}},
  \end{aligned}
  \end{equation}
  where we have used a H\"older inequality with exponents $(\infty,2,2)$ in the first inequality and triangle inequalities followed by the approximation properties of the $L^2$-orthogonal projector in the second inequality.
  After applying a discrete H\"older inequality with exponents $(p',p)$ on the sum over diffusive faces,
  a discrete Cauchy--Schwarz inequality on the sum over advective faces,
  and using generalized Young inequalities,
  we arrive at
  \begin{equation}\label{eq:Errb:T3+4}
    \begin{aligned}
      |\term_3 + \term_4|
      &\le
      \delta\left(
      \seminorm{1,p,h}{v_h}^q
      + \norm{\beta,\mu,h}{v_h}^2
      \right)
      + c(\delta) \sum_{F\in\Fha(w)} \vref{F} h_F^{2r+1}\seminorm{H^{r+1}(\TF)}{w}^2
      \\
      &\quad
      + c(\delta)\left(
      \sum_{F\in\Fhd(w)} \dref{F}(w)^{p'} h_F^{rp'}\seminorm{W^{r+1,p'}(\TF)}{w}^{p'}
      \right)^{\frac{q'}{p'}}.      
    \end{aligned}
  \end{equation}
  \\
  \underline{\textbf{Conclusion.}}
  Plugging \eqref{eq:Errb:T1}, \eqref{eq:Errb:T2}, and \eqref{eq:Errb:T3+4} into \eqref{eq:Errb:basic} and recalling that, in each of these estimates, $\delta > 0$ is arbitrary, the conclusion follows.
\end{proof}

\begin{remark}[Comparison with conforming finite elements]\label{newrem-2}
  As already discussed at the end of Section \ref{sec:main}, for $p>2$ the bound \eqref{eq:err.est} compares unfavorably with the conforming FE case in diffusion dominated cases.
  The reason are the terms $\term_2$ for the diffusive part, c.f. \eqref{eq:Erra:estimate:T2}, and $\term_3 + \term_4$ for the advective part, c.f. \eqref{eq:Errb:T3+4}, which behave as $\mathcal{O}(h^{rp'})$ for diffusion dominated faces (instead of $\mathcal{O}(h^{2r})$). One could slightly improve such bounds by the following observations. The polynomial approximation estimate in \eqref{eq:usedinremark} can be pushed further, requiring a higher regularity $\sigma(\nabla w) \in W^{r+1,p'}(\elements{F})$ but yielding a bound of order $h_F^{r+1}$. Furthermore, $\term_3 + \term_4$ could be bounded using advection, as in \eqref{eq:usedinrem2}, also in diffusion dominated cases, thus avoiding the $\mathcal{O}(h^{rp'})$ term. Indeed note that, due to the presence of $\vref{F}$ in $\term_{3+4}(F)$, the bound \eqref{eq:usedinrem2} does not need any assumption on dominant advection. The above modifications would lead to an $\mathcal{O}(h^{(r+1)p'})$ right hand side for $p>2$ in diffusion dominated cases.
\end{remark}

\subsection{Proof of Theorem \ref{thm:convergence}}\label{sec:err.est}

We start by writing
\begin{multline}\label{eq:err.est:basic}
  C_a \norm{1,p,h}{u_h - \lproj{h}{k} u}^q
  + \norm{\beta,\mu,h}{u_h - \lproj{h}{k} u}^2
  \\
  \begin{aligned}[t]
    \overset{\eqref{eq:ah:stability:3},\eqref{eq:bh:coercivity}}&\le
    a_h(u_h, u_h - \lproj{h}{k} u) - a_h(\lproj{h}{k}u, u_h - \lproj{h}{k} u)
    + b_h(u_h - \lproj{h}{k} u, u_h - \lproj{h}{k} u)
    \\
    \overset{\eqref{eq:Erra},\eqref{eq:Errb}}&=
    \Erra(u; u_h - \lproj{h}{k} u)
    + \Errb(u; u_h - \lproj{h}{k} u)
    \eqcolon\Err(u; u_h - \lproj{h}{k} u) \, ,
  \end{aligned}
\end{multline}
where we also used \eqref{eq:strong} and \eqref{eq:discrete} in order to derive the last identity.
For any real number $\delta > 0$, it holds
\[
\begin{aligned}
  \Err(u; u_h - \lproj{h}{k} u)
  \overset{\eqref{eq:Erra:estimate},\eqref{eq:Errb:estimate},\eqref{eq:bh:coercivity}}&\le
  \delta\left(
  a_h(u_h, u_h - \lproj{h}{k} u)
  - a_h(\lproj{h}{k} u, u_h - \lproj{h}{k} u)    
  + b_h(u_h - \lproj{h}{k}u, u_h - \lproj{h}{k}u)
  \right)
  \\
  &\quad
  + \delta\left(
  2\seminorm{1,p,h}{u_h - \lproj{h}{k}u}^q
  + \norm{\beta,\mu,h}{u_h - \lproj{h}{k}u}^2
  \right)
  + c(\delta) E_h^k
  \\
  &\le\delta\Err(u; u_h - \lproj{h}{k}u)
  + \delta\left(
  2\seminorm{1,p,h}{u_h - \lproj{h}{k}u}^q
  + \norm{\beta,\mu,h}{u_h - \lproj{h}{k}u}^2
  \right)
  + c(\delta) E_h^k,
\end{aligned}
\]
where $c(\delta)$ denotes the largest value between \eqref{eq:Erra:estimate} and \eqref{eq:Errb:estimate}, while $E_h^k$ gathers all the terms multiplied by $c(\delta)$ in the sum of the right-hand sides of \eqref{eq:Erra:estimate} and \eqref{eq:Errb:estimate}.
For any $\delta < 1$, this gives
\begin{equation}\label{eq:Err:estimate}
  \Err(u; u_h - \lproj{h}{k} u)
  \le\frac{2\delta}{1-\delta}
  \seminorm{1,p,h}{u_h - \lproj{h}{k}u}^q
  + \frac{\delta}{1-\delta} \norm{\beta,\mu,h}{u_h - \lproj{h}{k}u}^2
  + \frac{c(\delta)}{1-\delta} E_h^k.
\end{equation}
Let now $\epsilon$ and $\delta_\epsilon$ denote two real numbers such that $0 < \epsilon < \frac{2}{2+C_a}$ and $0 < \delta_\epsilon < \frac12\min\left(1, \epsilon C_a\right)$.
Plugging \eqref{eq:Err:estimate} with $\delta = \delta_\epsilon$ into \eqref{eq:err.est:basic}, noticing that, by definition, $\frac{2\delta_\epsilon}{1-\delta_\epsilon} < \frac{\epsilon}{1-\delta_\epsilon}C_a$, rearranging, and multiplying the resulting inequality by $(1-\delta_\epsilon)$, we get
\[
(1 - \delta_\epsilon - \epsilon)C_a\norm{1,p,h}{u_h - \lproj{h}{k}u}^q
+ (1 - 2 \delta_\epsilon) \norm{\beta,\mu,h}{u_h - \lproj{h}{k} u}^2
\le c(\delta_\epsilon) E_h^k.
\]
We conclude noticing that, by definition of $\epsilon$ and $\delta_\epsilon$, $1 - \delta_\epsilon - \epsilon > 1 - \frac{\epsilon}{2} C_a - \epsilon > 0$, and $1 - 2\delta_\epsilon > 0$.


\section{Numerical tests}\label{sec:num}

In this section, we investigate from the practical standpoint the error estimates derived in Theorem \ref{thm:convergence} through some numerical experiments.
The computational domain for all the tests developed in this section is the standard unit square $\Omega = (0,1)^2$. 
In order to analyze the numerical convergence rate, we consider a family of five triangular meshes $\Th$ with decreasing diameters, namely 
\[
h \in \{0.4714,\ 0.2215,\ 0.1189,\ 0.0588,\ 0.0314 \}.
\]
Starting from the coarsest mesh, the subsequent meshes are obtained by (approximately) halving the meshsize. 
Due to the nonlinearity of the problem for $p \neq 2$, we use a fixed-point strategy to compute the discrete solution. We set the maximum number of iterations to $N_{max}=500$, the tolerance for the relative residual error to $\varepsilon = 10^{-10}$, and we take the initial guess as the discrete solution of the problem with $p=2$.

\subsection{Example 1} \label{sec:ex1}
In the present example we consider problem \eqref{eq:strong} with the following exact solution, velocity field and reaction terms
\[
u(x,y) \coloneqq \sin (x+0.1) \cos( y+0.1),\quad 
\beta (x,y) \coloneqq \begin{bmatrix} \sin(x)\cos(y) \\ -\sin(y)\cos(x) \end{bmatrix},\quad \mu(x,y) \coloneqq 1 \, .
\]
The problem is investigated for different values of the Sobolev index $p$, specifically for the following choices
\[
p \in \{1.5 , 1.75, 2, 2.5, 3\}.
\]
The source term $f$ and the nonhomogeneous Dirichlet boundary condition are taken in accordance with $p$, the above analytical solution, and the remaining terms in the equation. 

Furthermore, we introduce a coefficient $\nu$ which multiplies the diffusive term $-\nabla\cdot \sigma (\nabla u)$ and allows to control the relative magnitude of the diffusion and advection terms.
Specifically, we set $\nu = 1$ for a diffusion-dominated regime and $\nu = 10^{-4}$ for an advection-dominated regime.

We compute the discrete solution $u_h\in\Poly{k}(\Th)$ for $k \in \{1,2,3\}$, in both the diffusion-dominated and advection-dominated regimes, with the aim of analyzing the numerical behavior of the error quantity 
\[
\mathrm{ERR}_h \coloneqq \left( \nu \norm{1,p,h}{u-u_h}^q + \norm{\beta,\mu,h}{u - u_h}^2\right)^{\frac{1}{2}}
\]
with $q$ defined by \eqref{eq:q}. \\
In Figure \ref{fig:ex1 D dominated} and Figure \ref{fig:ex1 A dominated} we show, respectively, convergence graphs for the diffusion-dominated and advection-dominated case. 
The numbers appearing in the yellow boxes, directly on the graph segments in our plots, represent the reduction rate associated to two subsequent errors, that is
$$
m_{h_1,h_2} = \frac{\log(\mathrm{ERR}_{h_2}-\mathrm{ERR}_{h_1})}{\log(h_2 - h_1)} 
$$
where $h_1,h_2$ here denote the two mesh sizes associated to the segment endpoints.

In the first setting, the results are in agreement with the theoretical estimates, but exhibit a higher error reduction rate with respect to the theoretical prediction. Indeed, for $p < 2$ we observe a reduction of the error behaving as $\mathcal{O}(h^k)$ instead of $\mathcal{O}(h^{\frac{kp}{2}})$ while, for $p>2$, the error decreases at a rate of $\mathcal{O}(h^{\frac{kp}{2}})$ instead of $\mathcal{O}(h^{\frac{kp'}{2}})$. In both cases, the reduction rate corresponds to that obtained by the best approximant to the solution $u$ in $\Poly{k}(\Th)$; we better investigate this aspect in the next example.

In the advection-dominated regime, on the other hand, the observed convergence rates closely match the theoretical estimates, i.e. $\mathrm{ERR}_h$ exhibits an $\mathcal{O}(h^{k+\frac{1}{2}})$ decay. In particular, we can observe the additional $h^{\frac12}$ factor which is gained due to the convection robustness of the method.

\begin{figure} [t!]
  \begin{subfigure}{.3\linewidth}
    \centering
    \includegraphics[width=1\linewidth]{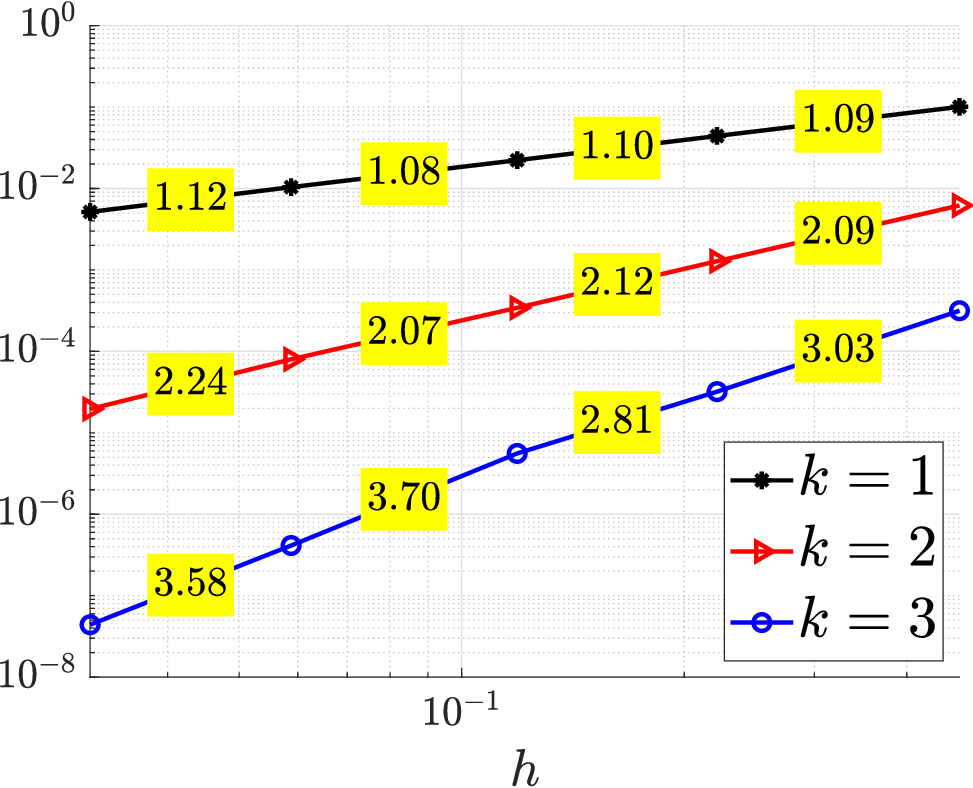}
    \caption{$p=1.5$}
  \end{subfigure}%
  \hspace{0.05\linewidth}%
  \begin{subfigure}{.3\linewidth}
    \centering
    \includegraphics[width=\linewidth]{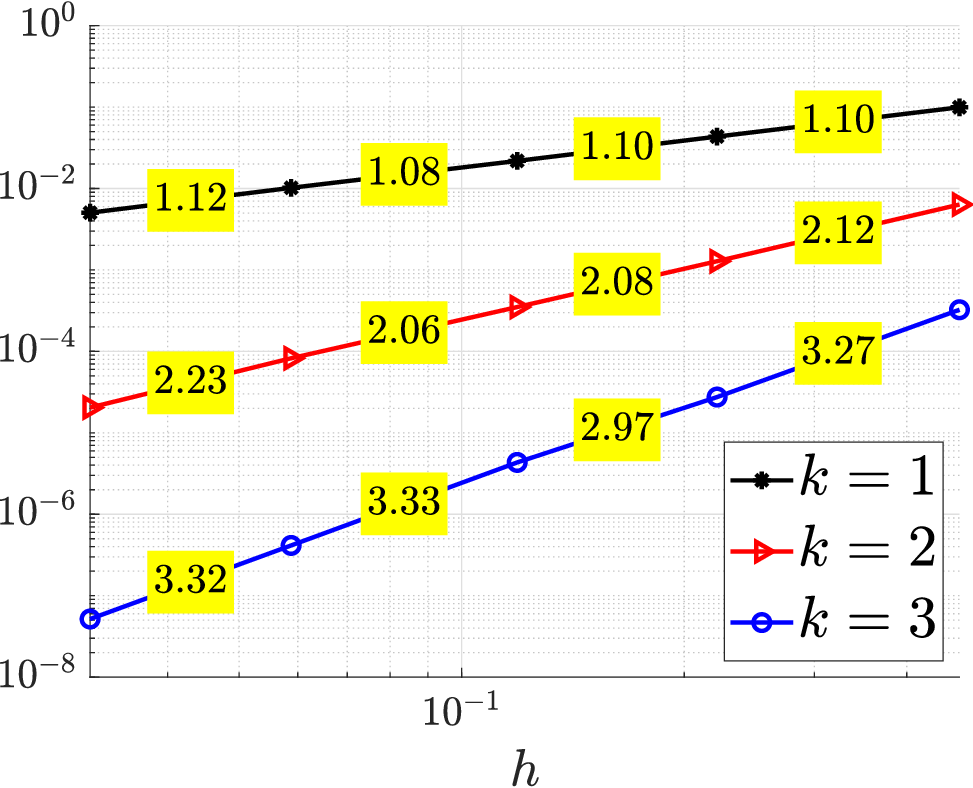}
    \caption{$p=1.75$}
  \end{subfigure}%
  \hspace{0.05\linewidth}%
  \begin{subfigure}{.3\linewidth}
    \centering
    \includegraphics[width=\linewidth]{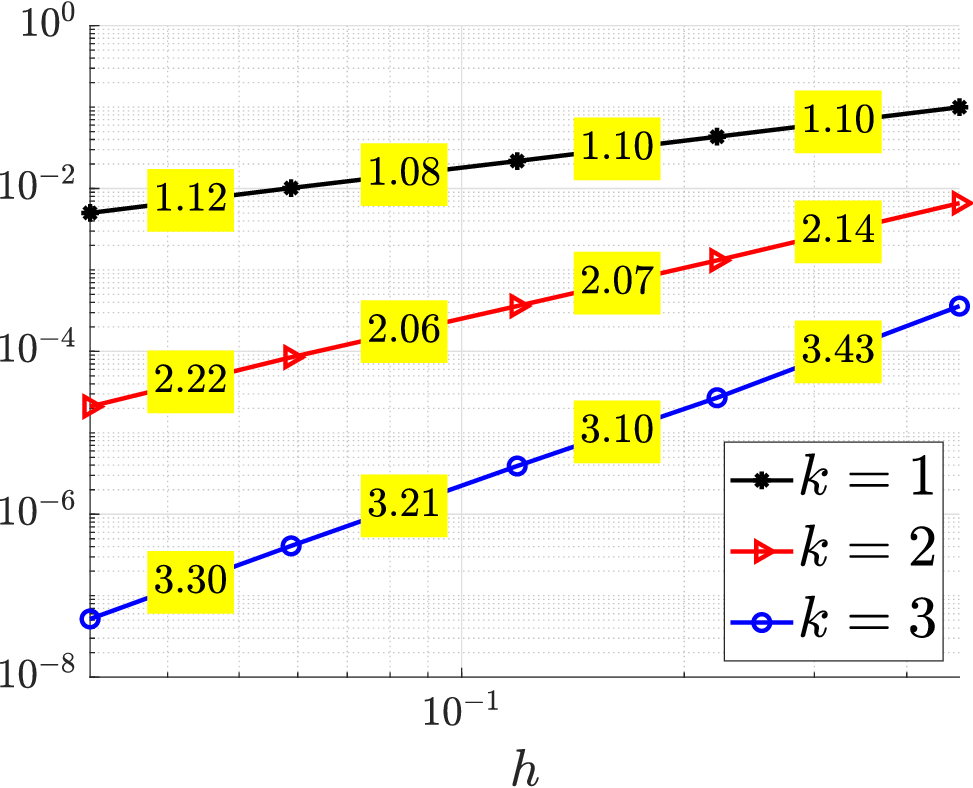}
    \caption{$p=2$}
  \end{subfigure}
\begin{center}
  \begin{subfigure}{.3\linewidth}
    \centering
    \includegraphics[width=\linewidth]{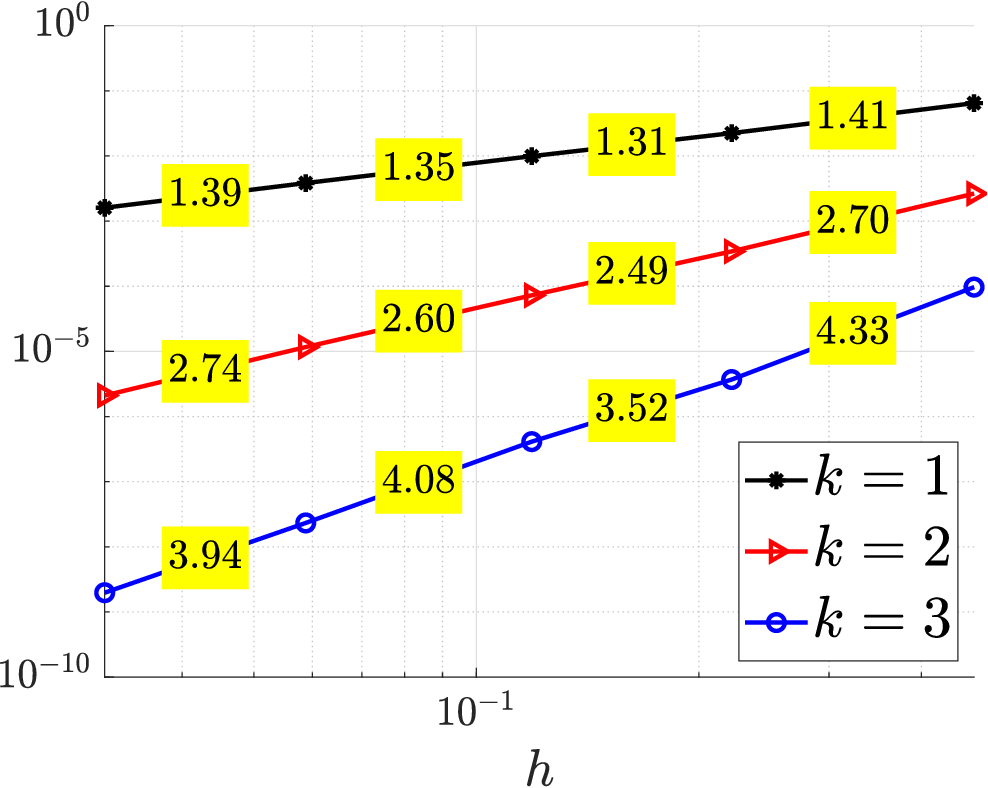}
    \caption{$p=2.5$}
  \end{subfigure}%
  \hspace{0.05\linewidth}%
  \begin{subfigure}{.3\linewidth}
    \centering
    \includegraphics[width=\linewidth]{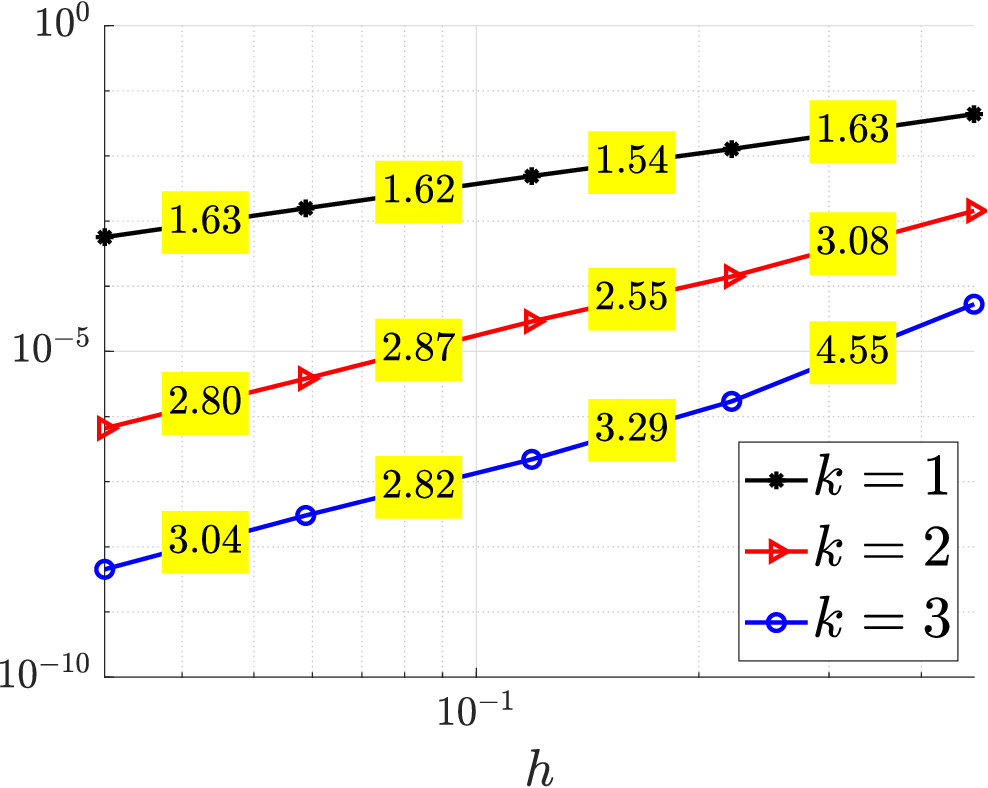}
    \caption{$p=3$}
  \end{subfigure}
  \end{center}
  \caption{Example of Section \ref{sec:ex1}. Convergence rate of $\mathrm{ERR}_h$ in the diffusion-dominated regime. Theoretical convergence rate: $\frac{kp}{2}$ for $p\leq 2$ and $\frac{kp'}{2}$ for $p>2$.}
  \label{fig:ex1 D dominated}
\end{figure}

\begin{figure} [t!]
  \begin{subfigure}{.3\linewidth}
    \centering
    \includegraphics[width=1\linewidth]{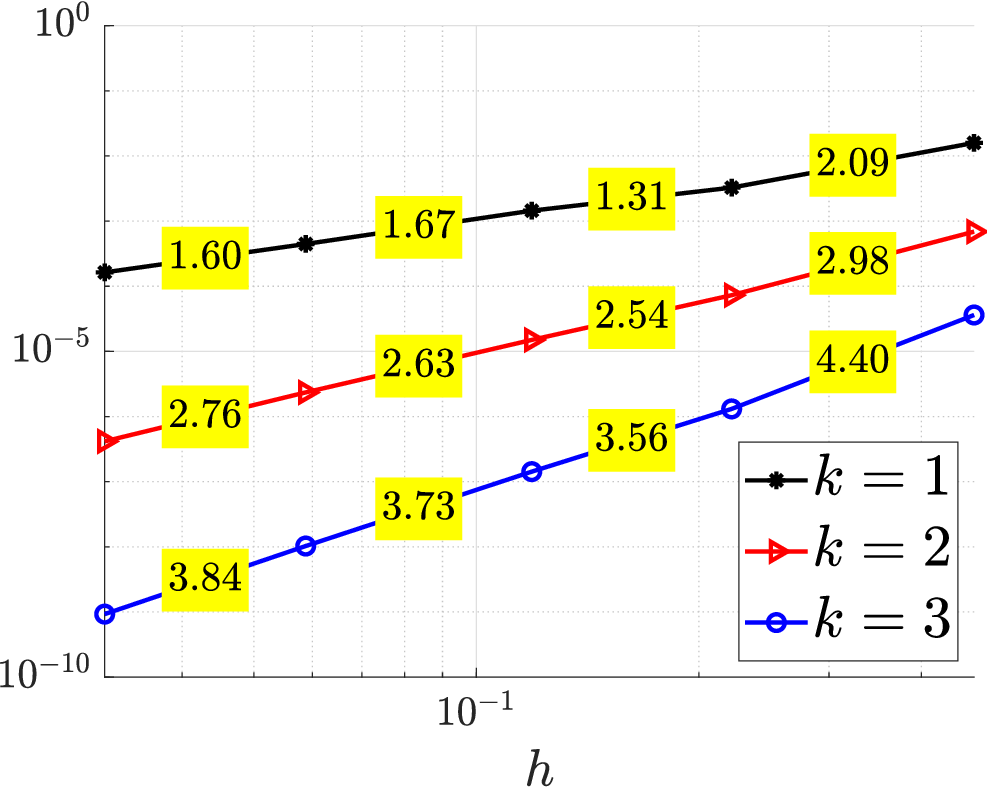}
    \caption{$p=1.5$}
  \end{subfigure}%
  \hspace{0.05\linewidth}%
  \begin{subfigure}{.3\linewidth}
    \centering
    \includegraphics[width=\linewidth]{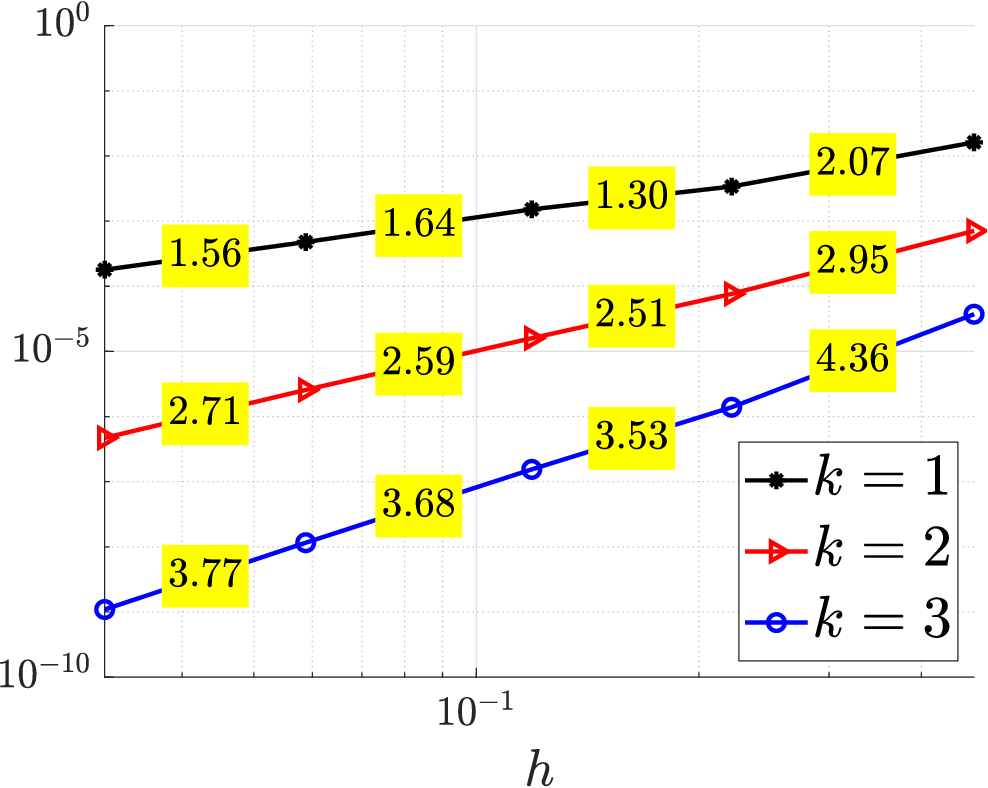}
    \caption{$p=1.75$}
  \end{subfigure}%
  \hspace{0.05\linewidth}%
  \begin{subfigure}{.3\linewidth}
    \centering
    \includegraphics[width=\linewidth]{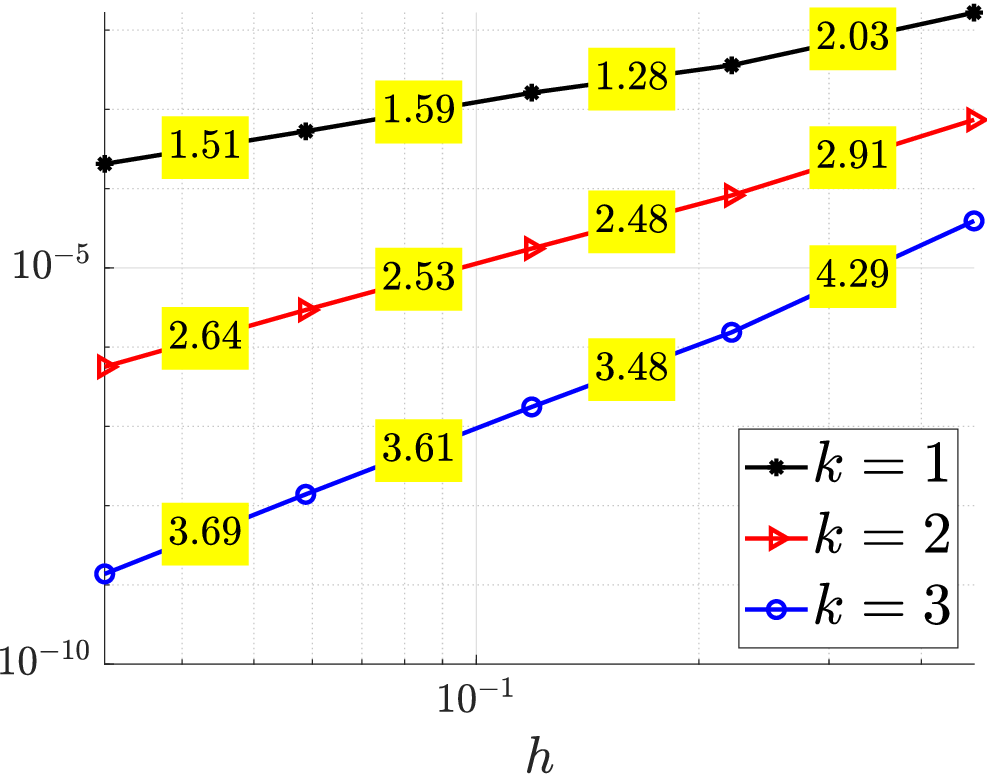}
    \caption{$p=2$}
  \end{subfigure}
\begin{center}
  \begin{subfigure}{.3\linewidth}
    \centering
    \includegraphics[width=\linewidth]{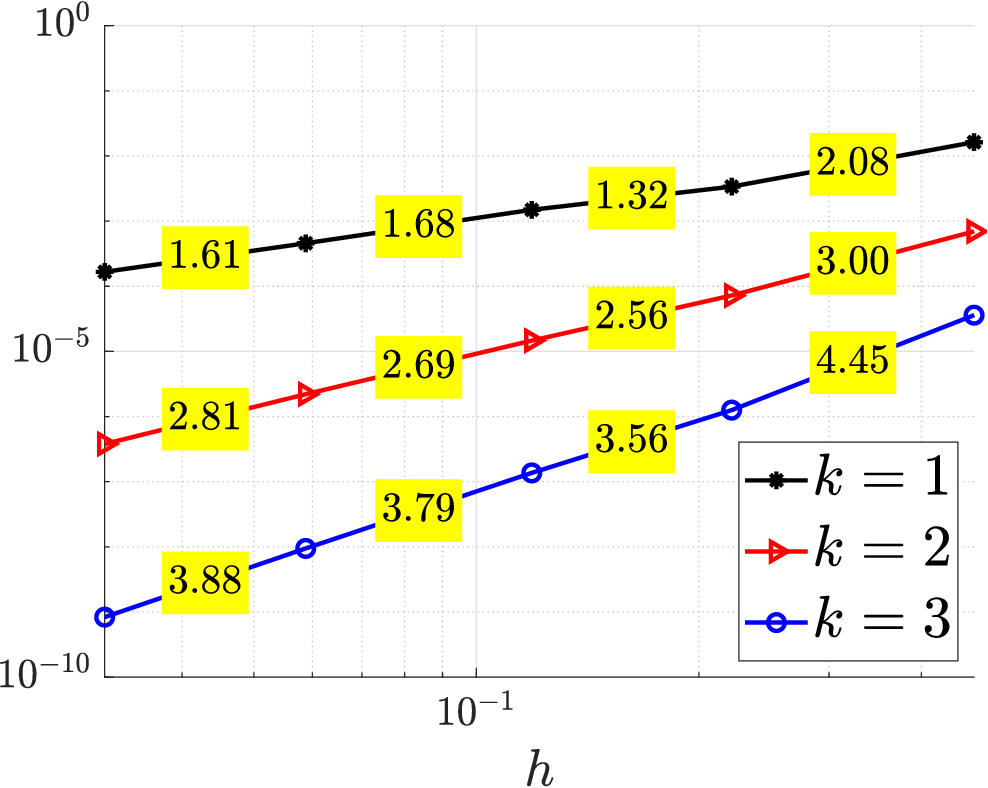}
    \caption{$p=2.5$}
  \end{subfigure}%
  \hspace{0.05\linewidth}%
  \begin{subfigure}{.3\linewidth}
    \centering
    \includegraphics[width=\linewidth]{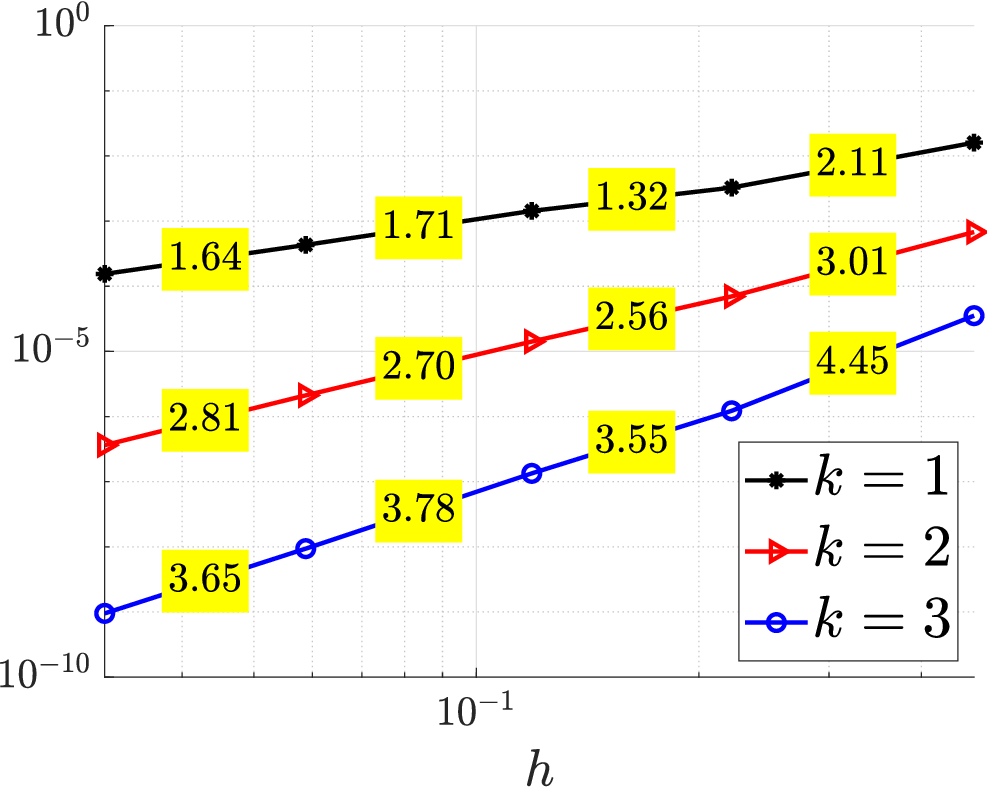}
    \caption{$p=3$}
  \end{subfigure}
  \end{center}
  \caption{Example of Section \ref{sec:ex1}. Convergence rate of $\mathrm{ERR}_h$ in the advection-dominated regime. Theoretical convergence rate: $k+\frac{1}{2}$ for all $p$.}
  \label{fig:ex1 A dominated}
\end{figure}

\subsection{Example 2} \label{sec:ex2}
In the second example, we consider problem \eqref{eq:strong} without the presence of advection and reaction phenomena.
The motivation of this second example is to better investigate the ``higher than expected'' reduction rate for the diffusion dominated case in Example 1.
We therefore directly set $\nu=1$, $\beta=0$, $\mu=0$ (pure diffusion) and choose
the right-hand side and the Dirichlet boundary condition in accordance with two distinct solutions (the exponential $(p,k)$-dependent solution was originally proposed in \cite{Di-Pietro.Droniou.ea:21}) :
\begin{enumerate}
\item[•] $u(x,y) \coloneqq \frac{1}{10}\mathrm{exp}\left[-10\left(\left|-x+0.5\right|^{p+\frac{k+2}{4}} + \left|-y+0.5\right|^{p+\frac{k+2}{4}}\right)\right]$;
\item[•] $u(x,y) \coloneqq \left(x-\frac{1}{2}\right)^2\left(y-\frac{1}{2}\right)^2$
\end{enumerate}

Here, the difference with respect to the preceding example is that the gradient $\nabla u$ vanishes at the point of coordinates $(0.5,0.5)$ for the exponential solution, and in the region $\left\{(x,y) \in \Omega \st x=\frac{1}{2} \text{ or } y=\frac{1}{2} \right\}$ for the polynomial solution, while, in the previous case, the solution had a non-zero gradient over the entire domain (which may determine a favorable situation for $p<2$, see for instance \cite{Di-Pietro.Droniou.ea:21}). In this respect, the the polynomial solution can be more challenging than the exponential one, as will be confirmed by the following results. 
Furthermore, the coarsest mesh, with meshsize $h=0.4714$, is removed and replaced by two new meshes obtained by halving subsequently the finest mesh: this results in two new meshsizes with $h=0.0158$ and $h=0.0082$.
For both solutions we have checked, by direct computation and/or numerically, that the flux $\sigma$ is sufficiently regular for the estimates of Theorem \ref{thm:convergence} to hold.

As in the previous example, we compute the error term $\mathrm{ERR}_h$ (in this case with $\nu=1$, $\beta=0$, $\mu=0$) considering $u_h \in\Poly{k}(\Th)$ for all combinations $(p,k)$ with $p \in \{1.5 , 1.75 \}$ and $k \in \{1, 2\}$.  
The outcome in Figure \ref{fig:ex 2 exp}, where $\mathrm{ERR}_h$ is plotted for the exponential solution, is similar to the previous example despite the different solution (now with vanishing gradient in a point of the domain) and the finer meshes adopted. Our current conclusions are that, probably, such behaviour is still pre-asymptotic, as is the case for the HHO method on meshes of similar size (cf., in particular, \cite[Table~4]{Di-Pietro.Droniou.ea:21}).

On the other hand, the results in Figure \ref{fig:ex 2 poly}, showing the convergence rates for the polynomial solution, are aligned with the expected convergence rate on the light of Theorem \ref{thm:convergence}. Indeed, an $\mathcal{O}(h^{\frac{kp}{2}})$ decay of $\mathrm{ERR}_h$ can be observed (especially for the finer meshes), which confirms from the practical side the sharpness of the theoretical results. 

\begin{figure} [t!]
\begin{center}
  \begin{subfigure}{.3\linewidth}
    \centering
    \includegraphics[width=\linewidth]{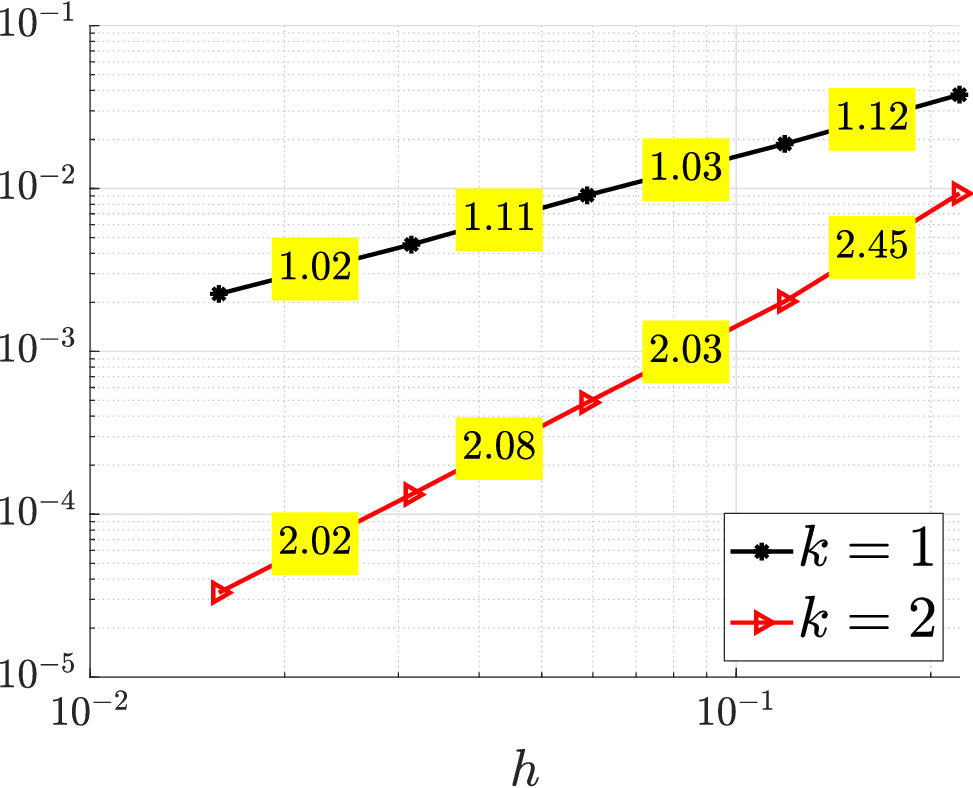}
    \caption{$p=1.5$}
  \end{subfigure}%
      \hspace{0.05\linewidth}%
  \begin{subfigure}{.3\linewidth}
    \centering
    \includegraphics[width=\linewidth]{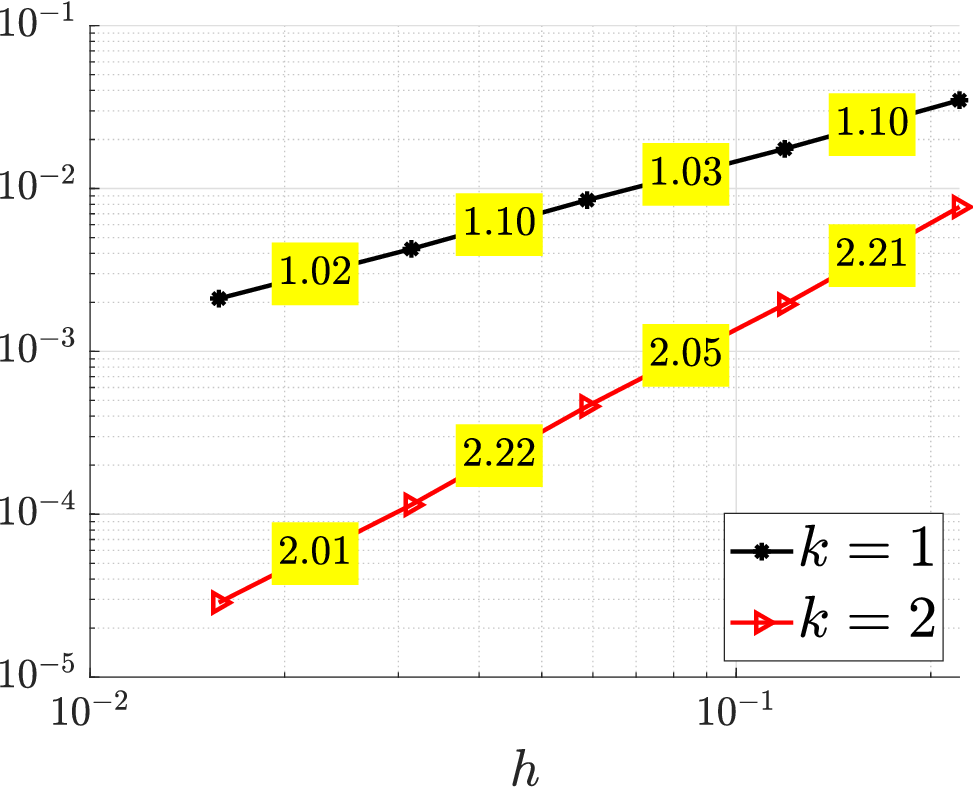}
    \caption{$p=1.75$}
  \end{subfigure}
  \end{center}
  \caption{Example of Section \ref{sec:ex2} with exponential solution. Convergence rate of $\mathrm{ERR}_h$. Theoretical convergence rate: $\frac{kp}{2}$ for $p \in \{1.5, 1.75 \}$.}
  \label{fig:ex 2 exp}
\end{figure}

\begin{figure} [t!]
\begin{center}
  \begin{subfigure}{.3\linewidth}
    \centering
    \includegraphics[width=\linewidth]{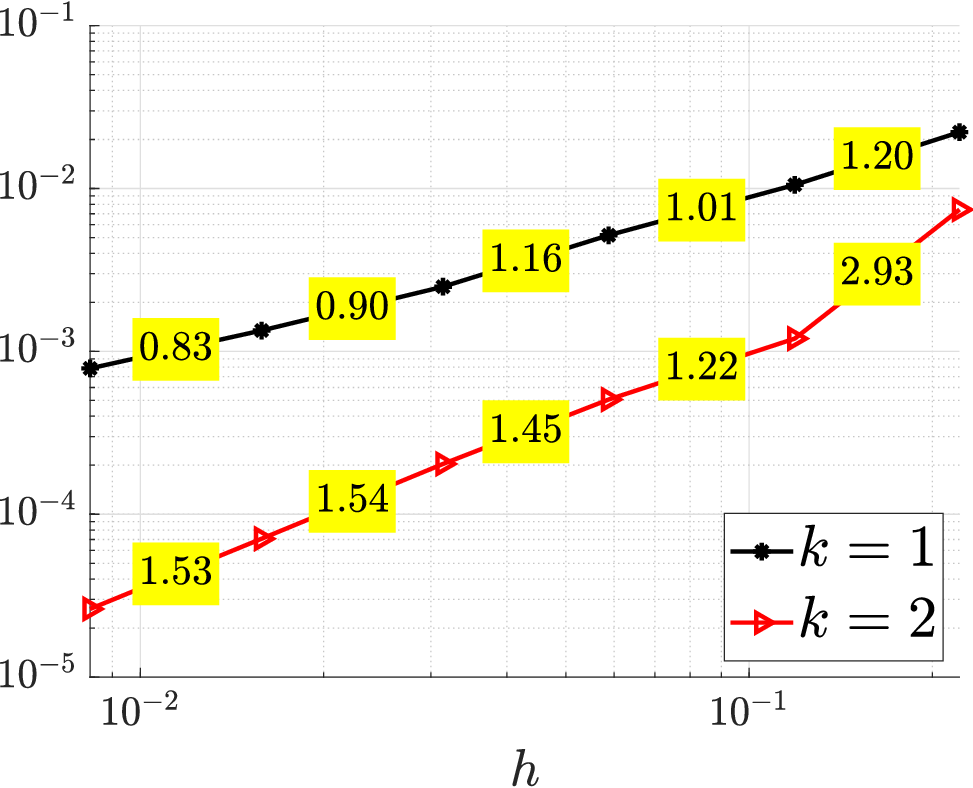}
    \caption{$p=1.5$}
  \end{subfigure}%
      \hspace{0.05\linewidth}%
  \begin{subfigure}{.3\linewidth}
    \centering
    \includegraphics[width=\linewidth]{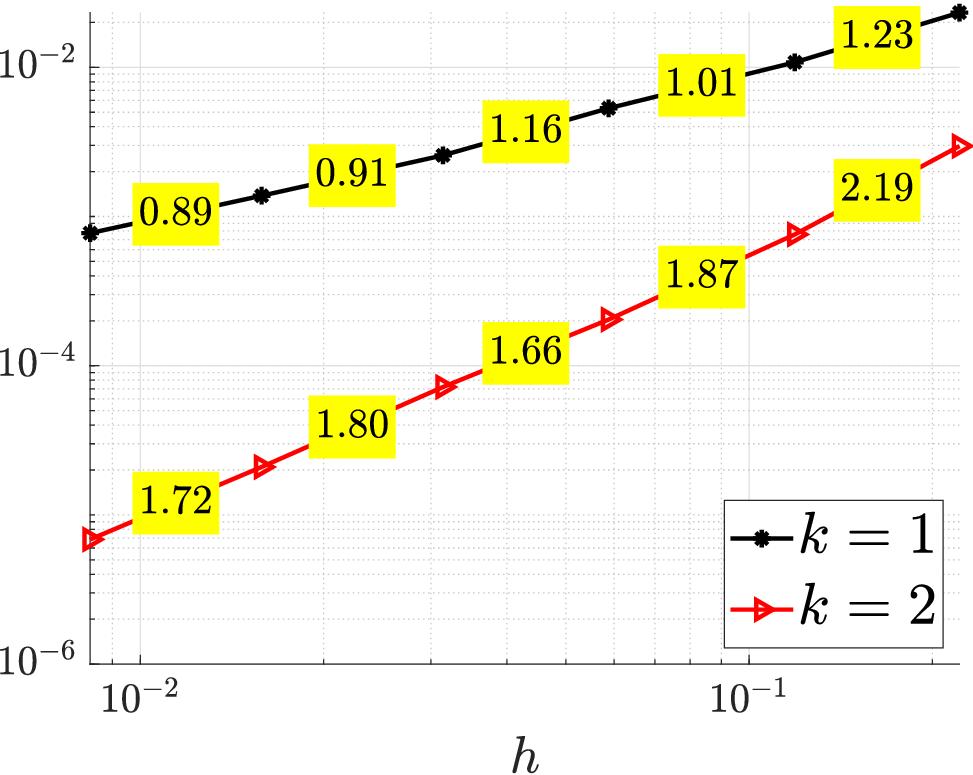}
    \caption{$p=1.75$}
  \end{subfigure}
  \end{center}
  \caption{Example of Section \ref{sec:ex2} with polynomial solution. Convergence rate of $\mathrm{ERR}_h$. Theoretical convergence rate: $\frac{kp}{2}$ for $p \in \{1.5, 1.75 \}$.}
  \label{fig:ex 2 poly}
\end{figure}


\section*{Acknowledgements}

The present results where partially supported by the European Union (ERC Synergy, NEMESIS, project number 101115663).
Views and opinions expressed are however those of the author(s) only and do not necessarily reflect those of the European Union or the European Research Council Executive Agency. 


\printbibliography

\end{document}